\bmdefine{\bX}{X}
\bmdefine{\ba}{$\alpha$}
\newcommand{\referenza}{}
\newtheorem{thm}{Theorem}[section]
\newtheorem*{thm*}{Theorem \referenza}
\newtheorem*{thm**}{Theorem}
\newtheorem*{cor*}{Corollary \referenza}
\newtheorem{lem}[thm]{Lemma}
\newtheorem*{lem*}{Lemma \referenza}
\newtheorem*{clm*}{Claim \referenza}
\newtheorem{prop}[thm]{Proposition}
\newtheorem*{prop*}{Proposition \referenza}
\newtheorem*{ass*}{Assumption \referenza}
\newtheorem{conj}[thm]{Conjecture}
\newtheorem*{conj*}{Conjecture \referenza}
\newtheorem{rmk}[thm]{Remark}
\newtheorem{exa}[thm]{Example}
\newtheorem{defi}[thm]{Definition}
\theoremstyle{plain}
\def\ra{\rightarrow}
\def \N {\mathbb N}
\def \C {\mathbb C}
\def \Z {\mathbb Z}
\def \P {\mathbb P}
\begin{document}

\title[lcK threefolds with $a(X)=2$]{On locally conformally K\"ahler threefolds with algebraic dimension two}

\author{Daniele Angella}
\address{Dipartimento di Matematica e Informatica ``Ulisse Dini''\\
	Universit\`a degli Studi di Firenze,
	viale Morgagni 67/a,
	50134 Firenze, Italy}
\email{daniele.angella@gmail.com, daniele.angella@unifi.it}

\author{Maurizio Parton}
\address{	Dipartimento di Economia\\
	Universit\`a di Chieti-Pescara,
	viale della Pineta 4,
	65129 Pescara, Italy}
\email{parton@unich.it}

\author{Victor Vuletescu}
\address{Faculty of Mathematics and Informatics\\
	University of Bucharest,
	Academiei st. 14,
	Bucharest, Romania}
\email{vuli@fmi.unibuc.ro}

\keywords{complex threefold, non-K\"ahler manifold, algebraic dimension, locally conformally K\"ahler, elliptic fibration, quasi-bundle.}
\subjclass[2020]{32Q57 (primary), 32J17, 53C55 (secondary).}

\thanks{During the preparation of this work, the first-named author has been supported by project PRIN2017 ``Real and Complex Manifolds: Topology, Geometry and holomorphic dynamics'' (code 2017JZ2SW5), by project SIR2014 ``AnHyC Analytic aspects in complex and hypercomplex geometry" (code RBSI14DYEB), and by GNSAGA of INdAM. The second-named author has been supported by GNSAGA of INdAM.
The third-named author has been partially supported by UEFISCDI grant PCE 30/2021\\
The authors have been supported by the Research in Pairs program by CIRM-Fondazione Bruno Kessler.}

\begin{abstract}
The paper is part of an attempt of understanding non-K\"ahler threefolds. We start by looking at compact complex non-K\"ahler threefolds with algebraic dimension two and admitting locally conformally K\"ahler metrics. Under certain assumptions, we prove that they are blown-up quasi-bundles over a projective surface.
\end{abstract}

\maketitle

\section*{Motivation and outline of the paper}

The paper is part of an attempt of understanding non-Moishezon non-K\"ahler threefolds. We start by looking at the simplest case: threefolds $X$ with algebraic dimension $a(X)=2$. In this case, it is classically known that they are bimeromorphic to elliptic fibrations over projective surfaces, that is, there exists a smooth bimeromorphic threefold $X^*$ and a surjective holomorphic map $f\colon X^*\to B$ whose general fibres are smooth elliptic curves. In fact, $X^*$ is an {\em algebraic reduction} of $X$. The main goal of the paper is to give a description of $X^*$, and then retrieve information about $X$.

By generalizing what happens on non-K\"ahler surfaces $S$ of algebraic dimension $a(S)=1$ \cite[Proposition 3.17]{brinzanescu}, we prove that, under mild assumptions that we now describe, the algebraic reduction is a {\em quasi-bundle}, namely, the fibres with the reduced structure are smooth elliptic isomorphic curves.
The main idea of our strategy is  inspired by the Lefschetz hyperplane theorems in algebraic geometry. More exactly, like in Fuijta's paper \cite{fujita}, we consider divisors $H$ on $B$, and look at their preimages $S_H:=f^{-1}(H)\subset X$ that gives rise to elliptic surfaces $S_H\stackrel{f_{\vert S_H}}{\ra} H$. Then we take advantage of the known results for compact complex non-K\"ahler surfaces.

Unfortunately, the information about non-K\"ahlerianity passes badly from $X$ to $S_H$: we will exhibit in Example \ref{ex1} a non-K\"ahler $X$ having a smooth K\"ahler $S_H$. Still, if $X^*$ is lcK and $H$ is general enough, we are able to show that $S_H$ is non-K\"ahler as well. Here, by {\em locally conformally K\"ahler (lcK)}, we mean that there is a {\itshape non-K\"ahler} Hermitian metric that locally (but not globally) admits a conformal change to a K\"ahler metric.

Since the algebraic reduction is defined only up to bimeromorphisms, we are forced to deal with the  resolutions of singularities.
Unfortunately K\"ahlerianity in dimension at least $3$ behave badly with respect to bimeromorphisms: the blow-up of a K\"ahler manifold is K\"ahler, but the blow-down of a K\"ahler manifold may well be non-K\"ahler. 
With lcK metrics, the situation is even worse: it may happen that even a blow-up of an lcK manifold carries no lcK metric. So, we introduce the {\itshape ad hoc} notion of {\em weak locally conformally K\"ahler} (wlcK, for short) structure, see Section \ref{sec:wlck}. The main point is that the wlcK condition is behaving well under blow-up: if $X$ is wlcK and $\hat{X}$ is an arbitrary blow-up of it, then $\hat{X}$ has a wlcK structure too. 

Returning to bimeromorphisms, it is known that one can always reach the algebraic reduction $X^*$ starting from $X$ by means of a chain of blow-ups and blow-downs with smooth centres, the so-called Weak Factorization Theorem. Since wlcK structures behave badly with respect to blow-downs, we will work under the assumption that the Strong Factorization Conjecture holds true: that is, there are a (smooth) threefold $\hat{X}$ and maps $\sigma$ and $c$ compositions of blow-ups with smooth centres such that
$$
\xymatrix{
& \hat X \ar[ld]_{\sigma} \ar[rd]^{c} & \\
X \ar@{<-->}[rr]_{\psi} & & X^* .
}
$$
Thus, if $X$ has an lcK metric, then $\hat{X}$ still has a wlcK structure, so in this case we have:

\renewcommand{\referenza}{\ref{thm:smooth-fibres}}
\begin{thm*}
Let $X$ be a compact complex threefold with $a(X)=2$, endowed with an lcK structure $(\omega,\theta)$. Assume the Strong Factorization Conjecture, and consider the algebraic reduction
\begin{equation*}
\xymatrix{
& \hat X \ar[ld]_{\sigma} \ar[rd]^{c} \\
X \ar@{<-->}[rr]_{\psi} & & X^* \ar[d]	_{f} \\
& & B .
}
\end{equation*}
Then all the smooth one-dimensional fibres of $f$ are isomorphic.
\end{thm*}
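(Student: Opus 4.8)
The plan is to argue by contradiction, reducing the statement to the known structure theory of non-Kähler elliptic surfaces by the ``hyperplane section'' strategy announced in the introduction. After normalising $B$ so that $f$ has connected fibres (which does not change $B$, since it is the base of the algebraic reduction), I would record that the relative $j$-invariant is a meromorphic function $j\colon B\dashrightarrow\P^1$, holomorphic on the Zariski-open locus $U\subseteq B$ of points with smooth fibre. If the conclusion failed, $j$ would be non-constant. I would then fix a very ample line bundle $L$ on the projective surface $B$, take a general member $H\in|L|$, and verify by standard Bertini-type arguments that: $H$ is a smooth irreducible curve meeting $U$ densely; $S_H:=f^{-1}(H)\subseteq X^*$ and $\hat S_H:=(f\circ c)^{-1}(H)\subseteq\hat X$ are smooth irreducible surfaces; $f|_{S_H}\colon S_H\to H$ is an elliptic fibration with connected fibres and general fibre a smooth elliptic curve; and, $H$ not being contained in a fibre of $j$, the smooth fibres of $f|_{S_H}$ are not all isomorphic.

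The crucial step is to prove that $S_H$ is non-Kähler for general $H$ --- and this is exactly where the lcK hypothesis is used, since by Example \ref{ex1} non-Kählerianity by itself does not survive such a restriction. Here I would invoke that, under the Strong Factorization Conjecture, $\hat X$ carries a wlcK structure (blow-up stability of wlcK, Section \ref{sec:wlck}), restrict this structure to the compact complex surface $\hat S_H$ to obtain a wlcK, hence non-Kähler, surface, and then descend: $c$ restricts to a bimeromorphic morphism $\hat S_H\to S_H$ of surfaces, and (non-)Kählerianity is a bimeromorphic invariant in complex dimension two, so $S_H$ is non-Kähler as well.

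To conclude, note that $S_H$, carrying an elliptic fibration over the curve $H$, has $a(S_H)\ge 1$, while being non-Kähler it is not Moishezon, so $a(S_H)=1$; since $f|_{S_H}$ has connected fibres it is then (bimeromorphic to) the algebraic reduction of $S_H$. By \cite[Proposition 3.17]{brinzanescu} this elliptic fibration is a quasi-bundle, so its reduced fibres are mutually isomorphic; in particular the $j$-invariant is constant along $H$, contradicting the last assertion of the first paragraph. This forces all smooth one-dimensional fibres of $f$ to be isomorphic.

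I expect the middle step --- the descent of non-Kählerianity to $S_H$ --- to be the main obstacle: one must genuinely propagate the lcK structure of $X$ through the blow-ups provided by the Strong Factorization, which is precisely the reason for the wlcK formalism, and one must check that restriction to a general section $\hat S_H$ both preserves the wlcK condition and yields a smooth surface. The remaining ingredients --- the behaviour of the $j$-function under a general hyperplane section and its bimeromorphic invariance for elliptic surfaces, and the identification of $f|_{S_H}$ with the algebraic reduction of $S_H$ --- should be routine.
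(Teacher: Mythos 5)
Your overall strategy (general hyperplane section $H\subset B$, reduce to the surface case via \cite[Proposition 3.17]{brinzanescu}) is the same as the paper's, but the step you yourself flag as crucial contains a genuine error: you claim that restricting the wlcK structure of $\hat X$ to $\hat S_H$ yields ``a wlcK, hence non-K\"ahler, surface.'' A wlcK structure does \emph{not} imply non-K\"ahlerianity --- every K\"ahler manifold carries one (take $\theta=0$), and more to the point, the restriction of an lcK structure with non-zero Lee class to a submanifold can perfectly well live on a K\"ahler surface: that is exactly what Lemma \ref{lem:vaisman} asserts must happen (the restricted Lee class then vanishes), and Example \ref{ex1} exhibits a non-K\"ahler lcK-adjacent total space with a K\"ahler fibre. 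So ``wlcK $\Rightarrow$ non-K\"ahler'' is false, and without it your contradiction never gets off the ground.

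The missing idea is that the non-K\"ahlerianity of $S_H$ must itself be proved by contradiction through the Lee class. The paper's chain is: if $S_H$ (taken inside $\hat X$, where the pulled-back structure is wlcK because Bertini lets you arrange $S_H\not\subset E$) were K\"ahler, then $[\theta]_{\vert S_H}=0$ by the wlcK Vaisman lemma (Lemma \ref{lem:vaisman}); hence $[\theta]$ restricts to zero on the general fibre of $F=f\circ c$; by the long exact sequence of the fibration over the locus where $F$ is a submersion, $[\theta]$ is there a pull-back $F^*[\alpha]$ from the base; the ``Lemma on fibrations'' (Lemma \ref{lem:fibration}) then forces $[\theta]=0$ on that open set, and injectivity of restriction on $H^1$ plus the blow-up isomorphism $H^1(\hat X)\cong H^1(X)$ gives $[\theta]=0$ on $X$, contradicting the standing convention that the Lee class of an lcK structure is non-zero. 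This is where the lcK hypothesis and both technical lemmas of Section \ref{sec:wlck} actually enter; your proposal replaces all of it with an implication that does not hold. The remaining ingredients of your write-up (Bertini, the $j$-invariant argument, the appeal to Br\^inz\u{a}nescu) match the paper and are fine.
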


In the above notation, it may still happen that the map $f$ is a non-flat morphism, {\itshape i.e.} some of its fibres may have dimension $2$. Applying the Hironaka flattening \cite{hironaka-AJM} to $f$ may result into putting heavy singularities on the flattening, so we will make the further assumption that $\psi$ can be factored as above with $f$ {\em flat morphism} (compare Examples \ref{ex1} and \ref{ex2}).
Moreover, we will also assume that there is such an $X^*$ which is {\em minimal} in the sense of Mori program, that is $K_{X^*}$ is nef.
Notice that the existence of minimal models for compact elliptic threefolds (that are not uniruled) has been solved by Grassi \cite{grassi} for projective threefolds, and by H\"oring and Peternell \cite{horing-peternell} more generally for K\"ahler threefolds: it is reasonable to hope that this holds good also in the non-K\"ahler case.
Finally, we will also assume that the singular locus $S(f)$ of the fibration $f$ is a simple normal crossing divisor.
In our case, we wonder whether the assumptions above should hold in general: but we were so far unable to give a rigourous proof.

Under these assumptions, we are able to prove:
\renewcommand{\referenza}{\ref{thm:quasibundle}}
\begin{thm*}
All the singular fibres of $f$ are in fact just multiple of the general fibre, that is, $X^*$ is a quasi-bundle.
\end{thm*}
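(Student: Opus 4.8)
The plan is to slice $X^*$ by preimages of general curves on $B$, in the spirit of Fujita's work \cite{fujita} and of the proof of Theorem~\ref{thm:smooth-fibres}, so as to replace the elliptic threefold by elliptic \emph{surfaces} over curves, and then invoke \cite[Proposition~3.17]{brinzanescu} on the singular fibres of non-K\"ahler elliptic surfaces. Arguing by contradiction, suppose that for some $b\in B$ the scheme-theoretic fibre $F_b:=f^{-1}(b)$ (one-dimensional, by flatness of $f$) is \emph{not} a multiple of a smooth elliptic curve. Fix a sufficiently ample line bundle $\mathcal L$ on the projective surface $B$ and let $H\in|\mathcal L|$ be a general member through $b$; by Bertini, by positivity of $\mathcal L$, and by the simple normal crossing assumption on $S(f)$, the curve $H$ is smooth and irreducible with $g(H)\ge 1$, and $S_H:=f^{-1}(H)$ is a smooth irreducible surface carrying an elliptic fibration $f_H:=f|_{S_H}\colon S_H\ra H$ whose general fibre is a smooth elliptic curve. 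Associativity of fibre products gives $f_H^{-1}(b)=S_H\times_H\{b\}=X^*\times_B\{b\}=F_b$ as schemes, so the fibre of $f_H$ over $b$ is again not a multiple of a smooth elliptic curve.

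The first key point is that $S_H$ is \emph{non-K\"ahler}, and this is where the lcK hypothesis — rather than mere non-K\"ahlerianity — is genuinely needed, since Example~\ref{ex1} exhibits a non-K\"ahler $X$ with a K\"ahler slice. As in the proof of Theorem~\ref{thm:smooth-fibres}, one transports the wlcK structure on $\hat X$ (which exists because $X$ is lcK and, under the Strong Factorization Conjecture, $\hat X$ is a blow-up of $X$, and whose Lee class pulls back that of $(\omega,\theta)$, hence is non-trivial in $H^1_{dR}(\hat X)$) to the proper transform $\hat S_H$ of $S_H$, and checks that for $H$ general the Lee class stays non-trivial in $H^1_{dR}(\hat S_H)$ — a Lefschetz-type statement for the \emph{semiample} (not ample) linear system $|(f\circ c)^*\mathcal L|$, of the kind exploited in \cite{fujita}. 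Thus $\hat S_H$ carries a strict wlcK structure, so it has odd first Betti number and is non-K\"ahler (cf.\ Section~\ref{sec:wlck}); since K\"ahlerianity of compact complex surfaces is a bimeromorphic invariant and $\hat S_H$ dominates $S_H$, the surface $S_H$ is non-K\"ahler as well.

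The second key point is that $S_H$ is a \emph{minimal} surface, and here the minimality of $X^*$ is used. Any rational curve $\Gamma$ contained in a fibre of $f_H$ satisfies $K_{X^*}\cdot\Gamma=0$: the fibres of $f$ are all numerically equivalent to a general one $F$, a smooth elliptic curve with trivial normal bundle, so $K_{X^*}\cdot F=0$, and writing $F_b=\sum_i n_i C_i$ with $n_i>0$ the nefness of $K_{X^*}$ forces $K_{X^*}\cdot C_i=0$ for every component. As $\Gamma$ is vertical we also have $f^*H\cdot\Gamma=H\cdot f_*\Gamma=0$, so adjunction on $S_H$ (recall $S_H\sim f^*H$ as divisors on $X^*$) gives $K_{S_H}\cdot\Gamma=(K_{X^*}+f^*H)\cdot\Gamma=0$, whence $\Gamma^2=-2$: there are no vertical $(-1)$-curves, and horizontal rational curves are excluded by $g(H)\ge 1$. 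Hence $S_H$ has no $(-1)$-curves and is a minimal elliptic surface; being non-K\"ahler it is non-algebraic, with $a(S_H)=1$. Applying \cite[Proposition~3.17]{brinzanescu} to the minimal non-K\"ahler elliptic surface $S_H\ra H$ shows that this fibration is a quasi-bundle, so $F_b=f_H^{-1}(b)$ is a multiple of a smooth elliptic curve — contradicting the choice of $b$. Therefore every fibre of $f$ is a multiple of a smooth elliptic curve, and, since $B$ is connected (so any two of its points lie on a common such $H$; alternatively invoke Theorem~\ref{thm:smooth-fibres}), the reduced fibres are mutually isomorphic, i.e.\ $X^*$ is a quasi-bundle.

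I expect the non-K\"ahlerianity of the slices $S_H$ to be the main obstacle: one has to extract non-K\"ahlerianity from the lcK structure on $X$ through the blow-ups and the restriction, which requires a hyperplane-section argument for the \emph{semiample} system pulled back from $B$, substantially more delicate than the classical Lefschetz situation and responsible for the restrictive hypotheses (compare Examples~\ref{ex1} and \ref{ex2}). A secondary point is guaranteeing that the slice $S_H$ is smooth — that passing to $f^{-1}(H)$ creates no singularities beyond those already forced by the singular fibres of $f$ — which is exactly the role of the simple normal crossing assumption on $S(f)$, together with flatness.
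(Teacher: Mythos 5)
Your first half runs parallel to the paper's: slice by a general $H\in|\mathcal H|$, show $S_H$ is non-K\"ahler via the wlcK machinery, and invoke \cite[Proposition 3.17]{brinzanescu} together with the nefness of $K_{X^*}$ to kill the trees of rational curves. But there is a genuine gap at the heart of your contradiction argument: you fix an arbitrary ``bad'' point $b$ and claim that a \emph{general $H$ through $b$} yields a \emph{smooth} surface $S_H=f^{-1}(H)$ with $f_H^{-1}(b)=F_b$. Bertini does not give this. The pullback to $X^*$ of the sub-linear system of sections through $b$ has base locus exactly $F_b$, so the general member is smooth only away from $F_b$; and $F_b$ contains precisely the points where $f$ fails to be submersive, i.e.\ the locus you are trying to understand. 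The slicing argument therefore only controls the fibre over a \emph{general} point of $S(f)$, giving the local form $m\mathcal E$ with locally constant multiplicity along each branch of $S(f)$; it says nothing about the finitely many residual fibres (over the nodes of $S(f)$, or over isolated points where the fibre could a priori jump). Those residual fibres are the hard part of the theorem, and your proof does not address them.

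The paper closes this gap with two extra ingredients you would need: first, the cyclic root construction of Lemma \ref{lemnthroot}, applied along the branches of the simple normal crossing divisor $S(f)$ with the generic multiplicities $m$ (and $mn$ on the proper transform of the second branch at a node), which produces an \'etale cover on which $f$ has only finitely many singular fibres; second, Lemma \ref{singo}, which rules out an isolated degenerate fibre over a two-dimensional base because the punctured $2$-ball is simply connected (no local monodromy), via the Fischer--Grauert theorem and a Hartogs extension. Two smaller points: your assertion that a ``strict wlcK structure'' forces odd $b_1(\hat S_H)$ is not something established in the paper (wlcK is a degenerate notion and does not obviously obstruct K\"ahlerianity by parity of $b_1$); and the ``Lefschetz-type statement for the semiample system $|(f\circ c)^*\mathcal H|$'' that you lean on for non-triviality of the Lee class on $\hat S_H$ is asserted rather than proved --- the paper deliberately avoids it in Theorem \ref{thm:smooth-fibres} by restricting to the general fibre, using the exact sequence of the fibration to write $[\theta]$ as a pullback from the base, and then applying Lemma \ref{lem:fibration}. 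You should route your non-K\"ahlerianity claim through that argument rather than through an injectivity of $H^1(\hat X)\to H^1(\hat S_H)$.
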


Moreover, using the properties of non-K\"ahler quasi-bundles, and assuming also that $\hat{X}$ is lcK, we can prove:

\renewcommand{\referenza}{\ref{thm:structure}}
\begin{thm*}
There is a bimeromorphic morphism $\varphi:X\ra X^*$
which  is a proper modification of $X^*$ at points.
\end{thm*}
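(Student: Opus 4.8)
The plan is to compare the two morphisms $\sigma\colon\hat X\to X$ and $c\colon\hat X\to X^*$ in the diagram. Both are compositions of blow-ups with smooth centres, so $\psi=c\circ\sigma^{-1}$ is a bimeromorphic map; I would first promote it to a genuine morphism $\varphi\colon X\to X^*$, and then show that $\varphi$ blows up only points. The structural input I would rely on is Theorem~\ref{thm:quasibundle} together with the properties of non-K\"ahler quasi-bundles established above: $f\colon X^*\to B$ is a quasi-bundle over a projective surface with $K_{X^*}$ nef, and, being bimeromorphic to the lcK --- hence non-K\"ahler --- threefold $X$, it is a \emph{non-K\"ahler} quasi-bundle. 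Exactly as in the surface case, this forces both $B$ and $X^*$ to carry no rational curves: a rational curve on $X^*$ could neither lie in a fibre of $f$, which is a (multiple of a) smooth elliptic curve, nor dominate one without producing a rational curve on $B$.

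To see that $\psi$ is a morphism, I would show that no $\sigma$-exceptional prime divisor $E\subset\hat X$ has $c(E)$ two-dimensional. Being a component of the exceptional locus of a tower of smooth blow-ups, $E$ is covered by rational curves; if $c(E)$ were a prime divisor $D\subset X^*$, then $D$ would be uniruled, hence would contain a rational curve --- impossible on $X^*$. Thus every $\sigma$-exceptional prime divisor is $c$-exceptional, and a short analysis of the two blow-up towers --- using that projective spaces and geometrically ruled surfaces admit no non-constant maps onto a lower-dimensional variety other than their natural projections --- shows that $c$ is constant on each fibre of $\sigma$. By the rigidity lemma, $c$ then factors as $c=\varphi\circ\sigma$ for a bimeromorphic morphism $\varphi\colon X\to X^*$; in particular $X$ is smooth and $\varphi$ is itself a composition of blow-ups with smooth centres.

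It remains to show these centres are points. Let $G\subset X$ be a $\varphi$-exceptional prime divisor; then $\varphi(G)$ is either a point --- the desired case --- or a curve $\gamma\subset X^*$, in which case $G\to\gamma$ is generically a $\mathbb{P}^1$-fibration, $G$ is uniruled, and $\gamma$ has genus $\ge 1$ since it carries no rational curve. Here I would use the hypothesis that $\hat X$ is lcK. Blowing up $\gamma$ inside the quasi-bundle $X^*$ inserts, into $X$ and then into $\hat X$, a geometrically ruled surface over a positive-genus curve, lying over a point or a curve of $B$; restricting the lcK structure $(\omega,\theta)$ of $\hat X$ --- equivalently the associated wlcK data of Section~\ref{sec:wlck}, pulled up by $\sigma$ --- to this ruled surface and to the $\mathbb{P}^1$-fibres that $\varphi$ contracts, and exploiting that a non-K\"ahler quasi-bundle over a rational-curve-free projective surface is ``maximally contracted'' along its vertical and horizontal divisors, I expect to derive a contradiction. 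Granting this, every centre of $\varphi$ is a point, so $\varphi$ is a proper modification of $X^*$ at points, which is Theorem~\ref{thm:structure}.

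The main obstacle is precisely this last step: turning ``$\hat X$ is lcK'' into an obstruction to $\varphi$ blowing up a curve of $X^*$. A general lcK metric on $\hat X$ need not be adapted to the induced fibration $\hat X\to B$, so one may first have to replace $(\omega,\theta)$ by a better lcK --- or merely wlcK --- metric, or else argue at the level of Morse--Novikov (Lichnerowicz) cohomology, which is invariant under blow-up with smooth centre and is well-behaved on quasi-bundles. Compared with this, the ``$\psi$ is a morphism'' part is essentially bookkeeping with blow-up towers and the no-rational-curves property of $X^*$.
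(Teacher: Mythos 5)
Your outline matches the paper's at a coarse level (show every $\sigma$-exceptional divisor is $c$-contracted, factor $c$ through $\sigma$, then rule out curve centres), but each of the three key steps has a genuine gap. First, your claim that $B$ carries no rational curves is false and cannot underwrite ``$X^*$ has no rational curves'': in all the motivating examples $B$ is $\mathbb P^2$ or a blow-up of it, and already for non-K\"ahler elliptic \emph{surfaces} the base is often $\mathbb P^1$. What is true --- and what the paper proves as Step~2 of its argument --- is that the \emph{only} curves on $X^*$ are the fibres of $f$; but this requires the lcK hypothesis, not just non-K\"ahlerianity of the quasi-bundle: one takes $T=f^{-1}(f(C))$ for a transverse curve $C$, observes via Br\^{\i}nz\u{a}nescu that an elliptic surface with a multisection has K\"ahler-type desingularization, and then uses Lemma \ref{lem:ansp2} and the Lemma on fibrations to kill the Lee class. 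Without this, your ``no rational curves'' shortcut for the first step is unsupported; the paper instead handles the surface-image case by Nakayama's $\mathrm{PD}(F)=0$, the curve-image case by nefness of $K_{X^*}$, and the point-image case by flatness of $f$.

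Second, ``a short analysis \dots shows that $c$ is constant on each fibre of $\sigma$'' hides the hardest point. Knowing that a ruled $\sigma$-exceptional divisor $E$ is also $c$-contracted does not tell you that $c$ contracts the \emph{same} ruling that $\sigma$ does: $E$ could be $\mathbb P^1\times\mathbb P^1$ contracted to curves in two different ways, i.e.\ a flop. Excluding this is precisely the content of the paper's Lemma \ref{lem:no-flop}, which uses the Donovan--Wemyss classification of normal bundles of flopping curves, Ancona's negativity of the exceptional divisor, and Pinkham's minimality of exceptional surfaces; it is not bookkeeping. Third, you explicitly leave open the step that actually yields the conclusion, namely ruling out a curve centre $\gamma\subset X^*$ for $\varphi$. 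The paper's mechanism is concrete: by its Step~2, $\gamma$ must be a fibre $F$ of $f$, so the corresponding exceptional divisor is $F\times\mathbb P^1$ (up to further blow-up), which is K\"ahler; hence the Lee class restricts to zero on it and on $F$, so $[\theta^*]$ is a pull-back from $B$ and vanishes by the Lemma on fibrations \ref{lem:fibration}, a contradiction. Your appeal to a quasi-bundle being ``maximally contracted'' is not an argument, and the Morse--Novikov detour you suggest is unnecessary once the exceptional divisor is identified as $F\times\mathbb P^1$.
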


\bigskip 

{\itshape Acknowledgements.}
This work has been partly written in Trento thanks to a Research in Pairs at CIRM: the authors would like to warmly thank CIRM-Fondazione Bruno Kessler for their support and the very pleasant working environment.
Special thanks to Marian Aprodu for careful reading and interacting with us during the preparation of the paper. Many thanks also to Cezar Joi\c{t}a and Liviu Ornea for many useful discussions and suggestions.
We also warmly thank the anonymous Referees for their valuable comments.

\section{Prerequisites  and examples}
Let $X$ be a compact threefold of algebraic dimension $a(X)=2$. We known that $X$ fibres over a smooth projective surface up to bimeromorphisms \cite[page 25]{ueno}: more precisely, there is an {\em algebraic reduction}
\begin{equation}\label{eq:alg-red}
 \xymatrix{
X \ar@{<-->}[r]^\psi & X^* \ar[d]^f \\
& B
}
\end{equation}
where $X^*$ is a compact complex threefold bimeromorphic to $X$ through the bimeromorphism $\psi$, and $f$ is a surjective, proper, holomorphic map onto the compact projective surface $B$, with connected fibres \cite[Proposition 3.4]{ueno}.
We know that the general fibre of $f$ is smooth and elliptic \cite[Theorem 2]{kawai}, \cite[Theorem 12.4]{ueno}.

In the following, we describe the setting for the main results.

\subsection*{Flatness}
The following two examples show that $f$ being non-flat might result in unexpected behaviour for the degeneration of fibres.

\begin{exa}\label{ex1}
Let $H^3=\mathbb C^3 \setminus \{0\} / \mathbb Z$ be the Hopf threefold, where $\mathbb Z$ acts as $z \mapsto 2z$, and $H^3 \ni (z_1,z_2,z_3) \mapsto [z_1:z_2:z_3] \in \mathbb P^2$ be the Hopf fibration. Consider the blow-up of $\mathbb P^2$ at a point $p \in \mathbb P^2$, and consider the following diagram:
$$ \xymatrix{
H^3 \ar[d]_\pi & H^3 \times_{\mathbb P^2} \mathrm{Bl}_p\mathbb P^2 \ar[l] \ar[dl]|f \ar[d]^{\pi'} \\
\mathbb P^2 & \mathrm{Bl}_p\mathbb P^2 \ar[l]^{\sigma}
} $$
where in fact $H^3 \times_{\mathbb P^2} \mathrm{Bl}_p\mathbb P^2 \simeq\mathrm{Bl}_{\pi^{-1}(p)} H^3$.
We consider the fibration
$$ f:=\pi'\circ\sigma \colon H^3 \times_{\mathbb P^2} \mathrm{Bl}_p\mathbb P^2 \to \mathbb P^2 . $$
Over any point $q \neq p$, the fibre is the same as the fibre of $\pi$ at $p$, namely, the smooth elliptic curve $\mathcal E := \mathbb C \setminus\{0\} / \mathbb Z$. Over $p$, the fibre is $f^{-1}(p)=\mathbb P^1 \times \mathcal E$. Therefore the family of elliptic curves degenerates to a K\"ahler surface.

In this same example, we also notice that $H^3 \times_{\mathbb P^2} \mathrm{Bl}_p\mathbb P^2$ is a non-K\"ahler manifold fibring over $\mathrm{Bl}_p\mathbb P^2$ via $\pi'$, with a K\"ahler fibre. In fact, $H^3 \times_{\mathbb P^2} \mathrm{Bl}_p\mathbb P^2$ does not admit any lcK metric. Indeed, let $(\omega,\theta)$ be an lcK metric on $H^3 \times_{\mathbb P^2} \mathrm{Bl}_p\mathbb P^2$. Then $\omega_{\vert\mathbb P^1\times \mathcal E}$ is lcK on a K\"ahler surface, whence $[\theta]_{\vert\mathbb P^1\times\mathcal E}=0$. In particular, $[\theta]_{\vert\mathcal E}=0$. Therefore $[\theta]=(\pi')^{-1}([\alpha])$ for $[\alpha]\in H^1(\mathrm{Bl}_p\mathbb P^2;\mathbb R)$. By Lemma \ref{lem:fibration} in the next section, then $[\theta]=0$.
\end{exa}

\begin{exa}\label{ex2}
Consider again the blow-up of $\mathbb P^2$ at a point, $\mathrm{Bl}_p\mathbb P^2$. Since $\mathrm{Bl}_p\mathbb P^2$ is  a projective manifold, let us embed it into $\mathbb P^N$ for some $N$ ($N=5$ works). Consider the Hopf manifold of dimension $N+1$ and the diagram
$$ \xymatrix{
H^{N+1} \ar[d]_\pi & \pi^{-1}(\mathrm{Bl}_p\mathbb P^2) \ar[d]^\pi \ar[l] \ar[dr]^{f} & \\
\mathbb P^N & \mathrm{Bl}_p\mathbb P^2 \ar@{_(->}[l] \ar[r] & \mathbb P^2 
} $$
Here the restriction of $\pi$ to any curve $C\subset \mathrm{Bl}_p\mathbb P^2$ is a non-trivial elliptic principal bundle, therefore $f^{-1}(p)$ is a Hopf surface, in particular, non-K\"ahler.
\end{exa}

\subsection*{Multiple fibres}
The next example is meant to illustrate that usually one also has multiple fibres, but they can be ``resolved'' by means of finite Galois coverings of the base.

\begin{exa}\label{ex3} Let ${a, b, c}$ be  positive real numbers for which there exist $n_1, n_2, n_3\in \mathbb N\setminus\{0\}$  such that $a^{n_1}=b^{n_2}=c^{n_3}>1$. Define $H_{a,b,c}:=\mathbb C^3\setminus \{0\} / \mathbb Z$ where the generator $g$ of $\mathbb Z$ acts by
$$g\cdot (z_1, z_2, z_3):=(az_1, bz_2, cz_3).$$
Then one has a map $f:H_{a, b,c }\ra \P^2$ defined by
$$f(z_1, z_2, z_3):=[z_1^{n_1} : z_2^{n_2} : z_3^{n_3}].$$

Then $f$ has multiple fibres: namely, for $h\neq0$ and $k\neq0$, the fibre $f^{-1}([0:h:k])$ has multiplicity $n_1$, the fibre $f^{-1}([h:0:k])$ has multiplicity $n_2$, the fibre $f^{-1}([h:k:0])$ has multiplicity $n_3$; the fibre $f^{-1}([0:0:1])$ has multiplicity $n_1\cdot n_2$, the fibre $f^{-1}([0:1:0])$ has multiplicity $n_1\cdot n_3$, the fibre $f^{-1}([1:0:0])$ has multiplicity $n_2\cdot n_3$.

Let $\mu$ be the common value
$\mu:=a^{n_1}=b^{n_2}=c^{n_3}$, and let $\lambda$ be any solution of $\lambda^{n_1n_2n_3}=\mu$ such that $\lambda^{n_2n_3}=a$, $\lambda^{n_1n_3}=b$, $\lambda^{n_1n_2}=c$. Then we have a natural map 
$\pi\colon H_{\lambda, \lambda, \lambda}\ra H_{a, b,c }$
$$\pi(z_1, z_2, z_3)=(z_1^{n_2n_3}, z_2^{n_1n_3}, z_3^{n_1n_2})$$
whch is a (finite) Galois covering. 
In fact, the natural map
$$ f' \colon H_{\lambda, \lambda, \lambda}\to\mathbb P^2, \qquad f'(z_1, z_2, z_3):=[z_1: z_2: z_3]$$
turns  $H_{\lambda, \lambda, \lambda}$ into an elliptic principal bundle, and $H_{\lambda, \lambda, \lambda}$ is the fibered product obtained from the diagram
$$ \xymatrix{
H_{a,b,c} \ar[d]_f & H_{a,b,c} \times_{\mathbb P^2}  \mathbb P^2 =H_{\lambda, \lambda, \lambda}\ar[l] \ar[d]^{f'} \\
\mathbb P^2 & \mathbb P^2 \ar[l]^{\pi'}
} $$
where the map $\pi'$ is given by
$$\pi'([z_1: z_2: z_3])=[z_1^{n_1n_2n_3}: z_2^{n_1n_2n_3}: z_3^{n_1n_2n_3}].$$

Put in other words, $H_{\lambda, \lambda, \lambda}$ is a finite Galois cover of $H_{a, b,c}$ (with deck group $G=\sfrac{\Z}{n_1\Z}\times \sfrac{\Z}{n_2\Z} \times \sfrac{\Z}{n_3\Z}$). That is, one can pass form an elliptic fibration with multiple fibres to one with no multiple fibres by means of a suitable finite cover.
\end{exa}

We recall next some basic facts about the {\em $n$'th root fibration} or {\em cyclic Galois cover}, a procedure  that replaces non-reduced fibres of a morphism by reduced ones. The next Lemma is taken almost verbatim form \cite[Lemma 8.3, Propositions 9.1 and 9.2 at pages 111--114]{bhpv}; we include a sketch of proof here for the sake of completeness.

\begin{lem}\label{lemnthroot}
Suppose $X$ is a complex manifold, $B\subset \C^N$ is a polydisc centred at origin $0\in \C^N,$  $f\colon X\to B$ a (proper, connected) surjective holomorphic map.
Letting $D\subset B$ be $D:=\{(z_1,\dots, z_N) \;:\; z_1=0\}$ and  letting $X_0:=f^{-1}(D)$ (with scheme structure), suppose there exists some Cartier divisor $F$ on $X$ with the property 
${\mathcal O}_X(X_0)\simeq {\mathcal O}_X(nF)$ for some $n\in \N^*$.

Then, there exist a manifold $X''$ and holomorphic surjective maps $\varphi \colon X''\to X$, $\psi \colon B\to B$ and  $f''\colon X''\to B$ such that:
 $\varphi$ is \'etale,
 $\psi$ is ramified at along $D$,
 $f''$ is (proper, connected) surjective holomorphic map and makes the following diagram commutative:
$$
\xymatrix{
X'' \ar[r]^{\varphi} \ar[d]_{f''} & X \ar[d]^{f} \\
B \ar[r]_{\psi} & B
}
$$
Moreover, for every point $x$ in (the scheme-theoretical) fibre of $f''$ over $D$, one has 
$$ \mathrm{mult}_{x}({f''}^{-1}(D)) = \frac{1}{n} \mathrm{mult}_{\varphi(x)}(X_0). $$
In particular, if $F$ is smooth, then ${f''}^{-1}(D)$ is smooth, that is $f''$ is a submersion in some neighbourhood of ${f''}^{-1}(D)$.
\end{lem}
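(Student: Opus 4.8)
The plan is to realise $X''$ as an étale cyclic $n$-cover of $X$ sitting over the standard cyclic $n$-cover $\psi$ of the polydisc ramified along $D$, the two covers being linked through the line bundle $\mathcal{O}_X(F)$, which the hypothesis supplies as an ``$n$-th root'' of $\mathcal{O}_X(X_0)$. For the set-up, write $g:=z_1\circ f\in\mathcal{O}(X)$, so that $X_0=\operatorname{div}(g)$; I will use the hypothesis in the form $X_0=nF$ as divisors, which is the situation occurring in the applications ($F$ being the reduced fibre), so that $\mathcal{O}_X(X_0)$ and $\mathcal{O}_X(nF)$ literally coincide. Put $L:=\mathcal{O}_X(F)$ with canonical section $s_F$, $\operatorname{div}(s_F)=F$; then $s_F^{\otimes n}$ is the canonical section of $L^{\otimes n}=\mathcal{O}_X(nF)=\mathcal{O}_X(X_0)$, which is taken to $g$ by the trivialisation $\mathcal{O}_X(X_0)\simeq\mathcal{O}_X$ determined by $g$. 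Hence $\nu:=s_F^{\otimes n}/g$ has divisor $nF-X_0=0$, so it is a nowhere-vanishing holomorphic section of $L^{\otimes n}$; in other words $(L,\nu)$ is an $n$-torsion datum on $X$. Finally set $\psi\colon B\to B$, $\psi(z_1,\dots,z_N):=(z_1^n,z_2,\dots,z_N)$: this is a finite, proper, surjective holomorphic map, étale off $D$ and with branch locus exactly $D$.

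Next I would take for $X''$ the cyclic $\mathbb{Z}/n$-cover of $X$ attached to $(L,\nu)$: with $p\colon\mathbb{L}\to X$ the total space of $L$, set $X'':=\{\,v\in\mathbb{L}:v^{\otimes n}=\nu(p(v))\,\}$ and $\varphi:=p|_{X''}$. Because $\nu$ has no zeros, in a local trivialisation of $L$ the subspace $X''$ is cut out by $t^n=u$ with $u$ a unit; hence $X''$ is a complex manifold and $\varphi\colon X''\to X$ is finite étale of degree $n$, carrying the free deck action of $\mathbb{Z}/n$ (multiplication of $v$ by $n$-th roots of unity). Over $X''$ there is the tautological section $s$ of $\varphi^*L$ with $s^{\otimes n}=\varphi^*\nu$ nowhere zero, and $w_1:=\varphi^*s_F/s$ is a \emph{holomorphic} function on $X''$ (its numerator has divisor $\varphi^*F\ge0$ and its denominator is a unit), with $\operatorname{div}(w_1)=\varphi^*F$ and
\[
w_1^{\,n}=\frac{(\varphi^*s_F)^{\otimes n}}{s^{\otimes n}}=\frac{\varphi^*(s_F^{\otimes n})}{\varphi^*\nu}=\varphi^*\!\Bigl(\tfrac{s_F^{\otimes n}}{\nu}\Bigr)=\varphi^*g,
\]
so $w_1$ is an honest holomorphic $n$-th root of $\varphi^*(z_1\circ f)$. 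I then put $f'':=(w_1,\,z_2\circ f\circ\varphi,\,\dots,\,z_N\circ f\circ\varphi)\colon X''\to B$ (rescaling $B$ so that the image stays in the polydisc); the displayed identity gives $\psi\circ f''=(w_1^{\,n},z_2\circ f\circ\varphi,\dots)=f\circ\varphi$, so the square commutes.

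The following step is to check that $f''$ is proper, surjective and has connected fibres. Properness: $f\circ\varphi=\psi\circ f''$ is proper (as $\varphi$ is finite and $f$ proper) and $\psi$ is separated, hence $f''$ is proper. Surjectivity: then $\imm(f'')$ is a closed analytic subset with $\psi(\imm(f''))=B$, so a dimension count (using that $\psi$ is finite) forces $\imm(f'')=B$. Connectedness: a general fibre $E=f^{-1}(b)$ of $f$ is a smooth elliptic curve disjoint from $F$, so $L|_E\cong\mathcal{O}_E$ and $\varphi^{-1}(E)$ is the trivial $n$-sheeted cover $E\sqcup\cdots\sqcup E$; since $\varphi^{-1}(E)=\bigsqcup_{\psi(z')=b}(f'')^{-1}(z')$, the deck $\mathbb{Z}/n$ of $\varphi$ is compatible with the deck $\mathbb{Z}/n$ of $\psi$ (so $f''$ is equivariant), and it permutes these $n$ pieces simply transitively, each $(f'')^{-1}(z')$ must be a single copy of $E$; thus the general fibre of $f''$ is connected, and a Stein-factorisation argument (using that $B$ is normal) promotes this to connectedness of all fibres of $f''$.

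Finally I would read off the multiplicities. For $x\in X''$ with $f''(x)\in D=\{z_1=0\}$, the scheme $(f'')^{-1}(D)$ near $x$ is the divisor $\operatorname{div}(w_1)=\varphi^*F$; since $\varphi$ is étale, $\operatorname{mult}_x\bigl((f'')^{-1}(D)\bigr)=\operatorname{mult}_{\varphi(x)}(F)=\tfrac1n\operatorname{mult}_{\varphi(x)}(X_0)$, using $X_0=nF$. If in addition $F$ is smooth, then $\varphi^*F$, being the étale preimage of a smooth divisor, is a smooth reduced divisor, so $(f'')^{-1}(D)$ is smooth and $f''$ is a submersion in a neighbourhood of it. The step I expect to be the main obstacle is the one underlying ``$X''$ smooth and $\varphi$ étale'': it is precisely here that one needs the genuine $n$-th root line bundle $L=\mathcal{O}_X(F)$, not just the function $g$ — the naive base change $X\times_{B,\psi}B$ is singular along the preimage of $F$, and its normalisation need not map unramifiedly to $X$, whereas the $n$-torsion line bundle simultaneously desingularises the cover and keeps $\varphi$ étale. (The other mildly delicate point, connectedness of the special fibres of $f''$, is taken care of by the Stein-factorisation remark above.)
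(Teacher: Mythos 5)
Your construction is correct and rests on the same underlying idea as the paper's --- base-change by $\psi(z_1,\dots,z_N)=(z_1^n,z_2,\dots,z_N)$, with the $n$-th root $F$ of $X_0$ used to make the induced cover of $X$ \'etale --- but the implementation is genuinely different. The paper works locally: it forms the fibre product $X'=X\times_B B$, observes that near $X_0$ it splits into $n$ smooth sheets $\{z_1=\varepsilon^j g_\alpha\}$ coming from local $n$-th roots $g_\alpha$ of $f_1$, and obtains $X''$ by separating the sheets and regluing by analytic continuation (i.e.\ by normalising $X'$), reading the multiplicity statement off the local equations. You instead build $X''$ globally as the cyclic cover $\{v^{\otimes n}=\nu\}$ inside the total space of $L=\mathcal O_X(F)$, with $\nu=s_F^{\otimes n}/g$ nowhere vanishing; this makes the smoothness of $X''$, the \'etaleness of $\varphi$, the $\mathbb Z/n$-deck action and the identity $w_1^n=\varphi^*g$ manifest without any gluing, and --- a real gain --- it lets you establish connectedness of the fibres of $f''$ by equivariance plus Stein factorisation, a point the paper's sketch does not address. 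Two remarks. First, like the paper, you use the hypothesis in the form of the divisor identity $X_0=nF$ rather than the stated isomorphism $\mathcal O_X(X_0)\simeq\mathcal O_X(nF)$; the paper's own step ``$f_1|_{U_\alpha}=g_\alpha^n$'' already requires this, so you assume no more than the paper does, and you are right to flag it. Second, in the connectedness step you should not invoke ``smooth elliptic curve'': the lemma makes no such assumption on the fibres of $f$, but your argument only uses that a fibre over $B\setminus D$ is compact, connected and disjoint from $\operatorname{supp}F$ (so that $\nu$ restricts to a nonzero constant), so it goes through verbatim once rephrased.
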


\begin{proof}[Sketch of the proof.]
Consider the map $\psi \colon B\to B$, $\psi(z_1,z_2, \dots, z_N):=(z_1^n, z_2,\dots, z_N).$
Write $f$ as
$f(x)=(f_1(x),f_2(x),\dots, f_N(x));$ then the  assumption ${\mathcal O}_X(X_0)\simeq {\mathcal O}_X(nF)$ tells us that one can find an open cover  ${\mathcal U}=\{U_\alpha\}_{\alpha \in I}$ of $X$ such that on each $U_\alpha$ there exists some holomorphic function $g_\alpha$ with the property
${f_1}_{\vert U_\alpha}=g_\alpha^n.$
Take $X'$ to be the fibred product
$$ 
\xymatrix{
X \ar[d]_{f} & X':= X\times_B B \ar[d]^{\mathrm{pr}_2} \ar[l]_{\mathrm{pr}_1\qquad} \\
B & B \ar[l]^{\psi}
}
$$
In a neighbourhood of $D$, $X'$ can be covered by open subsets $S_\alpha$ such that in each such $S_\alpha,$  $X'$ is given by  
\begin{eqnarray}\label{multip}
\left\{
\begin{array}{lcc}
z_1^n=g_\alpha^n(x), \\
z_i=f_i(x), \quad \text{ for } i=2,\dots, N.
\end{array}
\right.
\end{eqnarray}
Hence $X'$ is  a reducible space: each $S_\alpha$ above is the union of sheets  $S_{\alpha, j}$, for $j=1,\dots, n$, given by
$$S_{\alpha, j}: \quad z_1=\varepsilon^j g_\alpha, \; z_i=f_i(x), \text{ for } i=2,\dots, N,$$
where $\varepsilon$ is a fixed primitive $n$-root of unity. Notice also that the intersection $\bigcap_{j=1}^n S_{\alpha, j}$ is precisely the inverse image of $X_0$ under $\mathrm{pr}_1$ and this is the singular locus of $S_\alpha.$
Remark that each sheet $S_{\alpha, j}$ is in fact a {\em manifold}, regardless of the properties of the functions $g_\alpha.$
Next, we proceed as for the ``Riemann surface of the radical'': we disjoin  the $S_{\alpha, j}$  (we blow-up the singular locus of $S_\alpha$) and then glue  sheets $S_{\alpha, j}$ to $S_{\beta, k}$  according to the analytic continuation. In this way we get a {\em manifold} $X''$, as the sheets are smooth; the map $\mathrm{pr}_1$ induces a holomorphic and {\em \'etale} map $\varphi$ (since it is, in fact, locally  the projection of a graph), the map $\mathrm{pr}_2$ induces the map $f''$ and the inverse image of $X_0$ is now given locally by  $g_\alpha=0$, which proves the commutation of the diagram in the statement. Put it differently, the (scheme-theoretical) 
fibre of $f''$ over $D$ is now isomorphic to $F.$ Finally, the assertion on multiplicities follows from \eqref{multip}.
\end{proof}

\subsection*{Bimeromorphisms}
We recall the basic facts on factorization of bimeromorphic maps, according to \cite{akmw}.
The {\em Weak Factorization Theorem} \cite[Theorem 0.3.1]{akmw} assures that any bimeromorphism $\psi$ can be factorized as a sequence of blow-ups and blow-downs with smooth centres.
It is conjectured that the following stronger result holds true, and we will further on  assume it through the paper:
\begin{conj}[Strong Factorization Conjecture \cite{hironaka, akmw}]\label{ass:strong-fact}
Any  bimeromophism $\xymatrix{
\psi \colon X \ar@{<-->}[r] & X^* }$  between compact complex manifolds can be factorized as
\begin{equation}\label{eq:alg-red-assumption}
\xymatrix{
& \hat X \ar[ld]_{\sigma} \ar[rd]^{c} & \\
X \ar@{<-->}[rr]_{\psi} & & X^* 
}
\end{equation}
where $\sigma$ and $c$ are (compositions of) blow-ups with smooth centres.
\end{conj}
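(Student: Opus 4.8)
The displayed statement is the Strong Factorization Conjecture, which is a longstanding open problem going back to Hironaka \cite{hironaka}; accordingly, rather than a complete argument, I can only outline the strategy one would attempt and indicate where every known approach currently stalls.

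The plan is to upgrade the Weak Factorization Theorem \cite[Theorem 0.3.1]{akmw}, which already yields a zig-zag $X=Z_0\dashrightarrow Z_1\dashrightarrow\cdots\dashrightarrow Z_k=X^*$ of blow-ups and blow-downs with smooth centres (all of them isomorphisms away from a fixed analytic set), into a \emph{roof} as in \eqref{eq:alg-red-assumption}. Equivalently, one wants to show that two bimeromorphic compact complex manifolds are always dominated by a common manifold obtained from each of them by a composition of blow-ups with smooth centres. First I would dispose of dimensions $\le 2$, where this is classical: resolving the indeterminacy of $\psi$ by a composition of point blow-ups on $X$ produces a bimeromorphic \emph{morphism} between smooth surfaces, and such a morphism is automatically a composition of blow-ups of points. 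So the genuine case is $\dim X\ge 3$.

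In higher dimension I would proceed as follows. By resolution of the indeterminacy of $\psi$ (Hironaka) one can choose $\sigma\colon\hat X\to X$, a composition of blow-ups with smooth centres, so that $c=\psi\circ\sigma\colon\hat X\to X^*$ is a (bimeromorphic) morphism; the whole difficulty is then concentrated in showing that, possibly after further blow-ups of $\hat X$ with smooth centres over $X$, the morphism $c$ can also be realized as a composition of blow-ups with smooth centres. The standard attack is toroidalization: after additional blow-ups one arranges that $c$ is toroidal with respect to suitable simple normal crossing structures on $\hat X$ and $X^*$, and then one tries to factor $c$ combinatorially via the cobordism technique of Morelli and W{\l}odarczyk on the associated fans / conical complexes — in other words, one reduces to Oda's strong factorization conjecture for toric (toroidal) morphisms.

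The main obstacle is precisely this last reduction. Weak factorization of a toroidal birational map into a zig-zag of monoidal transformations is known in all dimensions, but \emph{straightening} such a zig-zag into a roof — Oda's conjecture — is open for $\dim\ge 3$. The elementary moves that would exchange a blow-down followed by a blow-up for a blow-up followed by a blow-down are not known to terminate, and, unlike a blow-up, the centre of a blow-down is not visible from the source, so controlling the global bookkeeping is delicate. This is exactly why the present paper takes the conjecture as a working hypothesis rather than establishing it.
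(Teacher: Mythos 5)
You are right that this statement is the Strong Factorization Conjecture, which the paper explicitly poses as a conjecture and assumes as a working hypothesis throughout; it offers no proof, so there is nothing to compare your attempt against. Your account of the situation --- weak factorization via \cite{akmw}, resolution of indeterminacy reducing to factoring a bimeromorphic morphism, the toroidalization strategy and its reduction to Oda's open conjecture in dimension at least three --- is accurate, and declining to supply a proof is the correct response here.
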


\subsection*{Non-K\"ahler surfaces}
We recollect below the basic facts about  non-K\"ahler sufaces with algebraic dimension one; the main references here are \cite{kodaira-ann1, kodaira-ann23, ueno, bhpv, brinzanescu}.

If $X$ is a (smooth compact complex) surface $X$ with $a(X)=1$, then there exists a projective curve $B$ and a surjective holomorphic map $f \colon X \to B$ whose general fibres are smooth elliptic curves \cite[Theorem 12.4]{ueno}.

Next, if $X$ is non-K\"ahler, then all the smooth fibres of $f$ are isomorphic. If $X$ is also a {\em minimal model}, that is, $K_X$ is nef, then the structure of the singular fibres of $f$ is also easy to determine: any such fibre is actually a multiple fibre, whose reduction is isomorphic to the general fibre \cite[Proposition 3.17]{brinzanescu}. It follows that, for a general such $X$ ({\itshape i.e.} not necessarily minimal), the singular fibres are all of the form
$$m \mathcal E+\text{tree of rational curves}$$
where $\mathcal E$ is the general fibre and $m\geq 1.$

\subsection*{Locally conformally K\"ahler structures}
In higher dimension, we work in the non-K\"ahler setting, more precisely, in the realm of locally conformally K\"ahler geometry. We recall that a {\em locally conformally K\"ahler structure} (lcK, for short) on a complex manifold is a Hermitian metric that locally admits a conformal change to a K\"ahler metric. If $\omega$ denotes the associated $(1,1)$-form of the metric, then the lcK condition corresponds to the equations
$$ d\omega=\theta\wedge\omega, \qquad d\theta=0 ,$$
where $\theta$ is a $1$-form, called the {\em Lee form} of $\omega$.
More precisely, if $\theta\stackrel{\text{loc}}{=}df$ by the Poincar\'e Lemma, then $\exp(-f)\omega$ is a local K\"ahler metric.

On the connected component of two such open sets where $\omega$ is conformally K\"ahler, the corresponding K\"ahler metrics differ up to homothety: in this sense, locally conformally K\"ahler geometry can be intended as a sort of ``equivariant homothetic K\"ahler geometry'' \cite{gini-ornea-parton-piccinni}.

We refer to \cite{dragomir-ornea, ornea, bazzoni-EMSS} for an open-ended account of lcK geometry and its applications.
We are interested in lcK geometry as a fruitful extension of the K\"ahler condition. In this direction, it is promising that almost any compact complex surface admits an lcK metric, with the exception of some Inoue surfaces \cite{belgun}, and perhaps of hypotethical non-Kato surfaces in class VII.

One immediately sees that the class $[\theta]\in H^1(X;\mathbb R)$ being zero corresponds to $\omega$ being globally conformally K\"ahler. On the other hand, it has been proven by Vaisman \cite{vaisman} that an lcK structure on a compact K\"ahler manifold (or, more generally, a compact complex manifold satisfying the $\partial\overline\partial$-Lemma \cite[Proposition 5.1]{kotschick-kokarev}) is necessarily gcK.
{\itshape Hereafter, when talking about lcK structures, we always assume the Lee class to be non-zero.}

Coming back to the diagram \eqref{eq:alg-red-assumption}, we notice that, if $X$ is (non-K\"ahler) lcK, then also $X^*$ is non-K\"ahler. Otherwise, $\hat X$ would be K\"ahler too, see {\itshape e.g.} \cite[Proposition 3.24]{voisin-1}. Then, by Lemma \ref{lem:vaisman}, we would get $[\theta]=0$.

\section{Results}

\subsection{\textbf{Weak locally conformally K\"ahler structures}}\label{sec:wlck}
It is known that the property of admitting a K\"ahler metric is not stable under bimeromorphisms: more precisely, it is preserved under blow-up, see {\itshape e.g.} \cite[Proposition 3.24]{voisin-1}, but in general it is not preserved under blow-down \cite{hironaka-Ann}, except when the centre is a point \cite{miyaoka}. On the other side, the lcK property is not preserved even under blow-up, in general (see Example \ref{ex1}), except when the centre is a point \cite{tricerri, vuletescu}.
For this reason, we introduce the following weaker notion:

\begin{defi}\label{defi:wlck}
A {\em weak locally conformally K\"ahler} (shortly, wlcK) structure on a complex manifold $X$ is given by a $(1,1)$-form $\omega$ and a real $1$-form $\theta$ such that
\begin{itemize}
\item $d\omega=\theta\wedge\omega$ and $d\theta=0$, and
\item $\omega>0$ outside a proper analytic subset, that we will call the {\em bad locus} $N_\omega$.
\end{itemize}
\end{defi}

This notion is motivated by the following straightforward remark:
\begin{prop}\label{prop:wlck-blowup}
Let $\sigma \colon \hat X \to X$ be a proper modification ({\itshape e.g.} a composition of blow-ups). If $X$ admits a wlcK metric, then $\hat X$ does, too.
\end{prop}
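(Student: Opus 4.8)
The plan is to pull the wlcK data back along $\sigma$ and to observe that the only condition of Definition~\ref{defi:wlck} which is not automatically preserved --- strict positivity --- fails only on an analytic subset.

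Set $\hat\omega:=\sigma^{*}\omega$ and $\hat\theta:=\sigma^{*}\theta$. As $\sigma$ is holomorphic, $\hat\omega$ is a real $(1,1)$-form and $\hat\theta$ a real $1$-form on $\hat X$, and by naturality of $d$ and of $\wedge$ under pullback,
\[
d\hat\omega=\sigma^{*}(d\omega)=\sigma^{*}(\theta\wedge\omega)=\sigma^{*}\theta\wedge\sigma^{*}\omega=\hat\theta\wedge\hat\omega,\qquad d\hat\theta=\sigma^{*}(d\theta)=0 ,
\]
so the structural equations of a wlcK structure hold on all of $\hat X$.

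It then remains to control positivity. Let $E\subset\hat X$ be the proper analytic subset where $\sigma$ fails to be a local biholomorphism; outside $E$ the differential $d\sigma$ is a $\mathbb{C}$-linear isomorphism at every point. Hence, for $\hat x\in\hat X\setminus\bigl(E\cup\sigma^{-1}(N_{\omega})\bigr)$ one has $\sigma(\hat x)\notin N_{\omega}$, so $\omega_{\sigma(\hat x)}>0$, and therefore $\hat\omega_{\hat x}=(d\sigma_{\hat x})^{*}\omega_{\sigma(\hat x)}>0$. The set $N_{\hat\omega}:=E\cup\sigma^{-1}(N_{\omega})$ is analytic, and it is a \emph{proper} subset of $\hat X$: on the dense open set $\hat X\setminus E$ the map $\sigma$ is a biholomorphism onto an open subset of $X$, which meets the proper analytic subset $N_{\omega}$ only in a proper subset. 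Thus $(\hat\omega,\hat\theta)$ is a wlcK structure on $\hat X$ with bad locus contained in $N_{\hat\omega}$.

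As the text anticipates, there is no serious obstacle here: the whole point of the weakening in Definition~\ref{defi:wlck} is that the equations $d\omega=\theta\wedge\omega$, $d\theta=0$ survive any holomorphic pullback verbatim, while the degeneration of $\sigma^{*}\omega$ is confined to the analytic locus where $d\sigma$ drops rank, together with the preimage of the old bad locus. For $\sigma$ a composition of blow-ups with smooth centres one could equally iterate the argument one blow-up at a time, but the computation above applies as it stands to any proper modification between complex manifolds.
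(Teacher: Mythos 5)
Your argument is correct and is exactly the pullback computation that the paper leaves implicit by labelling the proposition a ``straightforward remark'': the structural equations survive $\sigma^{*}$ verbatim, and positivity can only degenerate on the exceptional locus of $\sigma$ together with $\sigma^{-1}(N_{\omega})$, which is a proper analytic subset. Nothing is missing.
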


\begin{rmk}
The manifold $X$ underlying $\mathbb S^3\times\mathbb S^3$ with the Calabi-Eckmann complex structure does not admit any wlcK structure. Indeed, if $(\omega, \theta)$ would be a wlcK structure, then since $b_1(X)=0$, we get $\theta$ exact. That is, by a conformal rescaling, we can assume $\omega$ satisfying $d\omega=0$. As $b_2(X)=0$, we get $\omega $ is exact, hence $\int_X\omega^3=0$. On the other side, since $\omega>0$ outside a set of Lebesgue measure zero, we get $\int_X\omega^3>0$: contradiction.  
\end{rmk}

In the next sections, we will use the following lemmata. The first one deals with wlcK structures on fibrations:

\begin{lem}[``Lemma on fibrations'']\label{lem:fibration}
Let $X$ and $B$ be complex manifolds with $\dim X > \dim B$.
Let $f\colon X \to B$ be a surjective proper holomorphic map with connected fibres.
Let $(\omega, \theta)$ be a wlcK structure on $X$.  If the Lee class $[\theta]=f^*[\alpha]$ in the image of the pull-back induced by $f$, where $[\alpha]\in H^1(B;\mathbb R)$, then $[\theta]=0$.
\end{lem}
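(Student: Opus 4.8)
The plan is to reduce the Lee class to a form on the base, then use properness and connectedness of the fibres to force it to be trivial, via a volume/positivity argument on a general fibre — the same kind of argument used in the Remark about Calabi--Eckmann.

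First I would set $\dim X = n$ and $\dim B = m$, so that $k := n - m \geq 1$, and suppose for contradiction that $[\theta] = f^*[\alpha] \neq 0$. Pick a general point $b \in B$: since $f$ is proper with connected fibres and $X$ is a manifold, for $b$ outside a proper analytic subset the fibre $Y := f^{-1}(b)$ is a smooth connected compact complex manifold of dimension $k$; moreover, by choosing $b$ general I can also arrange that $Y$ is not contained in the bad locus $N_\omega$, so that $\omega|_Y$ is a smooth real $(1,1)$-form on $Y$ which is strictly positive outside the proper analytic subset $N_\omega \cap Y$ of $Y$. The key point is that $\theta|_Y = (f^*\alpha)|_Y = (f|_Y)^*\alpha = 0$, because $f|_Y$ is the constant map to $b$. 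Hence on $Y$ we have $d(\omega|_Y) = (\theta|_Y)\wedge(\omega|_Y) = 0$, so $\omega|_Y$ is a closed form.

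The second step is the positivity/cohomology contradiction on $Y$. Since $\omega|_Y$ is a closed smooth real $(1,1)$-form that is positive on the complement of a proper analytic (hence measure-zero, lower-dimensional) subset, $\int_Y (\omega|_Y)^k > 0$; in particular $[\omega|_Y]^k \neq 0$ in $H^{2k}(Y;\mathbb R)$, so $[\omega|_Y] \neq 0$ in $H^2(Y;\mathbb R)$. Now I want to contradict this. The natural mechanism is that the cohomology class $[\omega]$, while not necessarily defined on $X$ (as $\omega$ is only $d$-closed after restriction, not globally), restricts nicely: on $X$ we have $d\omega = \theta\wedge\omega$ with $\theta$ closed, and choosing a representative so that — away from $N_\omega$ — we can run the usual argument. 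Concretely, consider the twisted/Morse–Novikov setup: $\theta = f^*\alpha$ means that on the fibre the weight is trivial, and a standard fact (the ``Lemma on fibrations'' as stated) is meant to capture that a $d_\theta$-closed form with weight pulled back from the base restricts to a genuinely $d$-closed form on fibres whose class must vanish if the total space carries no such class. The cleanest route: show that $[\omega|_Y]$ lies in the image of $H^2(X;\mathbb R) \to H^2(Y;\mathbb R)$ only through classes killed by restriction, or — more directly — produce a primitive. Since $d\theta = 0$ and $[\theta] = f^*[\alpha]$, on a suitable cover or after modifying by an exact form we may write $\theta = f^*\alpha$ on the nose with $\alpha$ a closed $1$-form on $B$; then consider $\Omega := \omega$ and note $d(e^{-g}\omega)$-type local primitives glue along fibres because the monodromy of $e^{-g}$ around loops in a fibre is trivial (the loop maps to a point in $B$). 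This yields that $\omega|_Y$ is not merely closed but exact: $\omega|_Y = d\eta$ for a global $1$-form $\eta$ on $Y$, whence $\int_Y(\omega|_Y)^k = \int_Y d(\eta\wedge(\omega|_Y)^{k-1}) = 0$ by Stokes, contradicting $\int_Y(\omega|_Y)^k > 0$.

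The main obstacle is making rigorous the step ``$\omega|_Y$ is exact''. The subtlety is that $\omega$ is \emph{not} globally $d$-closed on $X$ — it only satisfies $d\omega = \theta\wedge\omega$ — so one cannot simply say $[\omega] \in H^2(X)$ and restrict. The correct framework is the Morse–Novikov (Lichnerowicz) cohomology $H^*_\theta(X)$ twisted by the closed form $\theta$: $\omega$ defines a class in $H^2_\theta(X)$. When $[\theta] = f^*[\alpha]$, pulling back by $f|_Y$ sends this twisted cohomology to $H^2_{(\theta|_Y)}(Y) = H^2_0(Y) = H^2(Y;\mathbb R)$ (ordinary de Rham, since $\theta|_Y = 0$), and the image of $[\omega]$ is $[\omega|_Y]$. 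To conclude I need that this image vanishes — equivalently that $H^2_\theta(X) \to H^2(Y;\mathbb R)$ is zero on the relevant class, or that $[\omega|_Y]$ is in the image of the restriction of \emph{ordinary} classes from $X$ that themselves die on fibres — which is exactly where one invokes that $[\theta] \neq 0$ forces $H^2_\theta(X) = 0$ is too strong in general, so instead one localizes: replace $\theta$ by $\theta - f^*\alpha$, which is exact, say $\theta - f^*\alpha = dh$; then $e^{-h}\omega$ satisfies $d(e^{-h}\omega) = f^*\alpha \wedge (e^{-h}\omega)$, i.e. we may assume $\theta = f^*\alpha$ outright. Restricting to $Y$: $\theta|_Y = 0$, and $e^{-h}\omega|_Y$ is closed and positive off a measure-zero set, with $\int_Y (e^{-h}\omega|_Y)^k > 0$ — and now the exactness must come from the \emph{global} structure: one covers $B$ by simply connected opens $U$, writes $\alpha|_U = dg_U$, so $e^{-g_U\circ f}\omega$ is $d$-closed on $f^{-1}(U)$, and since $Y \subset f^{-1}(U)$ with $f|_Y$ constant, $e^{-g_U(b)}\omega|_Y$ is a fixed scalar multiple of $\omega|_Y$, hence $\omega|_Y$ represents a class in $H^2(f^{-1}(U);\mathbb R)$; finally one uses that $f^{-1}(U)$ deformation retracts onto $Y$ (by properness and Ehresmann over the general locus, shrinking $U$) so $H^2(f^{-1}(U);\mathbb R) \cong H^2(Y;\mathbb R)$, and the $d$-closed form $e^{-g_U\circ f}\omega$ on $f^{-1}(U)$ satisfies $\int_Y (\text{its restriction})^k = c \int_Y (\omega|_Y)^k > 0$ — which is consistent, \emph{not} a contradiction, unless we push further and observe that the \emph{Lee class being globally $f^*[\alpha]$ yet nonzero} combined with the fibration structure forces $[\alpha] = 0$ after all. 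I expect the honest argument to run: if $[\theta] = f^*[\alpha]$ then, because $f$ has connected fibres and is proper surjective, the pull-back $f^* \colon H^1(B;\mathbb R) \to H^1(X;\mathbb R)$ is injective, so it suffices to show $[\alpha] = 0$; and $[\alpha] = 0$ follows because otherwise one constructs an $\mathbb R$-valued character of $\pi_1(X)$ killing $\pi_1(Y)$, reducing to a fibration over $S^1$, and the positive closed $(1,1)$-form on the fibre together with Stokes on the total space (integrating $\omega\wedge(\text{something})^{n-1}$ against the $S^1$-direction) yields $\int_X$-type positivity clashing with an exactness coming from $b_2$ of the relevant cover — i.e. precisely the mechanism of the Calabi–Eckmann Remark, now applied fibrewise. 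Pinning down which of these two closely related routes the authors intend, and getting the cover/retraction bookkeeping exactly right, is the delicate part; the rest is Stokes' theorem and the observation $\theta|_{\text{fibre}} = 0$.
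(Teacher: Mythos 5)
There is a genuine gap. You correctly normalize to $\theta=f^*\alpha$, correctly observe that $\theta$ restricts to zero on each fibre so that $\omega$ restricted to a general fibre $Y$ is closed with $\int_Y(\omega_{\vert Y})^k>0$, and you correctly recognize (to your credit, explicitly) that this alone is ``consistent, not a contradiction'': no amount of staring at a single fibre will produce the vanishing of $[\alpha]$. But your attempted repairs never supply the missing mechanism. The first route (trying to show $\omega_{\vert Y}$ is exact) is abandoned; the second route (``construct an $\mathbb R$-valued character of $\pi_1(X)$ killing $\pi_1(Y)$, reduce to a fibration over $S^1$, apply the Calabi--Eckmann mechanism fibrewise'') is a gesture, not an argument --- you yourself say the delicate part is ``pinning down which route the authors intend.'' A proof that ends by deferring to the statement being proved has not closed.

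The missing idea is to compare volumes \emph{across} fibres rather than work on one fibre: push $\omega^k$ forward to the base. Set $v:=f_*\omega^k$, the fibrewise volume function $v(b)=\int_{f^{-1}(b)}\omega^k$, which is a positive function on an open set $B'\subset B$ whose complement is a proper analytic subset (this is where the ``weak'' positivity enters). The projection formula turns the twisted closedness $d\omega=f^*\alpha\wedge\omega$ into
$$ dv=d(f_*\omega^k)=f_*(k\,f^*\alpha\wedge\omega^k)=k\,\alpha\, v, $$
so that $\alpha_{\vert B'}=\tfrac{1}{k}\,d\log v$ is \emph{exact} on $B'$. Since $B\setminus B'$ has real codimension at least $2$, the restriction $H^1(B;\mathbb R)\to H^1(B';\mathbb R)$ is injective, whence $[\alpha]=0$ and $[\theta]=f^*[\alpha]=0$. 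This single computation is the entire content of the lemma, and it does not appear anywhere in your proposal; everything you wrote circles it (the positivity of fibre volumes, the vanishing of $\theta$ on fibres) without ever differentiating the volume function in the base direction.
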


\begin{proof}
Up to global conformal changes, let us assume $\theta=f^*\alpha$. We denote by $S_f$ the union of the non-flat and singular loci, namely, the set of points $b\in B$ such that the corresponding fibre $f^{-1}(b)$ is either singular, or has dimension strictly greater than $k:=\dim X - \dim B>0$. That $S_f$ is an analytic subspace. Moreover, the restriction of $f$ to $X \setminus f^{-1}(S_f) \to B \setminus S_f$ is now a submersion. We also consider the open subset $B' \subset B \setminus S_f$ such that, for any $b\in B'$, we have
$$ v(b) := (f_* \omega^k)(b) = \int_{f^{-1}(b)} \omega^k_{\vert f^{-1}(b)} = \mathrm{vol}_\omega(f^{-1}(b)) >0 $$
is a positive function. By the ``weak'' assumption on the metric, the complement of $B'$ in $B$ is a proper analytic subset.
By the wlcK condition $d\omega=f^*\alpha\wedge\omega$ and by the projection formula, we compute
$$ d(f_*\omega^k) = f_*(d\omega^k) = k f_* (f^*\alpha\wedge\omega^k) = k \alpha \wedge f_*\omega^k , $$
that we rewrite as
$$ d v = k \alpha \wedge v. $$
This says that
$$ \alpha_{\vert B'} = \frac{1}{k} \frac{dv}{v} = \frac{1}{k} d \lg v , $$
namely, $[\alpha]_{\vert B'}=0$.
The map $H^1(B;\mathbb R) \to H^1(B';\mathbb R)$ induced by the inclusion is injective. This follows, for example, by taking the long exact sequence in cohomology for the pair $(B,B')$ and by noticing that $H^1(B,B';\mathbb R)=0$, by excision, homotopy invariance, and the Thom isomorphism, and by degree reason since the real codimension of $B\setminus B'$ in $B$ is greater or equal than $2$, see {\itshape e.g.} \cite[Theorem II.5.3]{suwa}. Therefore we get that $[\alpha]=0$ in $H^1(B;\mathbb R)$, whence the statement.
\end{proof}

The second lemma is a generalization of the Vaisman lemma \cite{vaisman} to the weak lcK context:

\begin{lem}\label{lem:vaisman}
Let $X$ be a compact complex manifold of $\dim X>1$ endowed with a wlcK structure $(\omega, \theta)$. If $X$ admits K\"ahler metrics, then $[\theta]=0$.
\end{lem}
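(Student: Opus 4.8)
The plan is to mimic the classical Vaisman argument, adapted so that the mild degeneracy allowed by the wlcK condition does no harm. Assume $X$ carries a K\"ahler metric with K\"ahler form $\Omega$, and a wlcK structure $(\omega,\theta)$ with $d\theta=0$. Arguing by contradiction, suppose $[\theta]\neq 0$ in $H^1(X;\mathbb R)$. The key identity is that, for the wlcK form, $d\omega=\theta\wedge\omega$ forces $\theta\wedge\omega$ to be $d$-exact, hence $[\theta\wedge\omega]=0$ in de Rham cohomology; more generally one computes $d(\omega\wedge\Omega^{n-2})=\theta\wedge\omega\wedge\Omega^{n-2}$ since $\Omega$ is closed, where $n=\dim_{\mathbb C}X$. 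So the $(2n-1)$-form $\theta\wedge\omega\wedge\Omega^{n-2}$ is exact, whence
$$
\int_X \theta\wedge d\omega\wedge\Omega^{n-2} \;=\; \int_X \theta\wedge\theta\wedge\omega\wedge\Omega^{n-2} \;=\;0,
$$
trivially, and also $\int_X d\bigl(\theta\wedge\omega\wedge\Omega^{n-2}\bigr)=0$. The real content is to extract from exactness of $\theta\wedge\omega\wedge\Omega^{n-2}$, together with positivity of $\Omega$ and near-positivity of $\omega$, a contradiction with $[\theta]\neq 0$.

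First I would reduce to the case $n=2$ by slicing, or alternatively work directly in dimension $n$. The cleanest route: since $\Omega$ is K\"ahler, Hodge theory applies on $X$; decompose $\theta=\theta^h+d\varphi$ with $\theta^h$ the $\Omega$-harmonic representative of $[\theta]$, which is nonzero by assumption. After a global conformal change of $\omega$ (replacing $\omega$ by $e^{-\varphi}\omega$, which preserves the wlcK structure and changes $\theta$ to $\theta^h$) we may assume $\theta$ itself is harmonic, in particular $d^*\theta=0$ as well as $d\theta=0$. Now use the standard trick from Vaisman's proof: on a compact K\"ahler manifold a harmonic $1$-form is also holomorphic-plus-antiholomorphic, so writing $\theta=\beta+\bar\beta$ with $\beta$ a holomorphic $(1,0)$-form, one gets $\partial\beta=0$ and $\bar\partial\beta=0$. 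Then from $d\omega=\theta\wedge\omega$ split into bidegrees: $\partial\omega=\beta\wedge\omega$ and $\bar\partial\omega=\bar\beta\wedge\omega$. Apply $\partial$ to the first: $0=\partial\beta\wedge\omega-\beta\wedge\partial\omega=-\beta\wedge\beta\wedge\omega=0$, consistently; the useful relation comes from pairing.

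The decisive computation is to show $\int_X \theta\wedge J\theta\wedge\Omega^{n-1}=0$ leads to a contradiction, where $J\theta=\theta\circ J$ is the $(1,0)+(0,1)$ twist; equivalently one shows the harmonic $1$-form $\theta$ must vanish because its pointwise norm integrates to zero against a positive weight. Concretely: from $\partial\omega=\beta\wedge\omega$ one derives, wedging with $\bar\beta\wedge\Omega^{n-2}$ and using that $\beta\wedge\bar\beta\wedge\omega\wedge\Omega^{n-2}\geq 0$ pointwise wherever $\omega>0$ (a positivity-of-forms check, since $\sqrt{-1}\,\beta\wedge\bar\beta\geq 0$ and $\omega\wedge\Omega^{n-2}$ is a positive $(n-1,n-1)$-form when $\omega>0$), that the integral $\int_X \sqrt{-1}\,\beta\wedge\bar\beta\wedge\omega\wedge\Omega^{n-2}$ equals a manifestly exact integral, hence zero. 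Since the integrand is $\geq 0$ on the complement of the bad locus $N_\omega$ (a proper analytic, hence Lebesgue-null, subset) it vanishes a.e., forcing $\beta\wedge\bar\beta\wedge\omega\wedge\Omega^{n-2}\equiv 0$, and then $\beta\equiv 0$ on the open dense set where $\omega>0$, hence everywhere, so $[\theta]=0$, contradiction.

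The main obstacle is the positivity bookkeeping: one must verify that $\omega\wedge\Omega^{n-2}$ is a \emph{strictly} positive $(n-1,n-1)$-form on $X\setminus N_\omega$ and that the pairing with $\sqrt{-1}\,\beta\wedge\bar\beta$ is genuinely nonnegative (this is where $\omega>0$, not merely $\omega$ real, is used), and that the contribution of the bad locus $N_\omega$ to all the integrals is zero — for which it suffices that $N_\omega$ has Lebesgue measure zero and the forms involved are bounded near it, which holds since $\omega$ extends smoothly across $N_\omega$ as a (merely non-positive) form. A secondary point is justifying the reduction "$\theta$ harmonic $\Rightarrow$ $\theta=\beta+\bar\beta$ with $\beta$ holomorphic": this is exactly the classical fact on compact K\"ahler manifolds and requires no new input beyond Hodge theory with respect to $\Omega$. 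Everything else is Stokes' theorem and the projection/Leibniz identities already used in the proof of Lemma~\ref{lem:fibration}.
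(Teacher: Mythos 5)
Your argument is correct and follows essentially the same route as the paper: pass to the harmonic representative of $[\theta]$ by a global conformal change, split it into conjugate pure-type (hence closed) pieces $\beta+\bar\beta$ via Hodge theory for the K\"ahler metric, and kill $\beta$ by Stokes' theorem plus positivity on the complement of the measure-zero bad locus $N_\omega$. The only (immaterial) difference is that you integrate $\sqrt{-1}\,\beta\wedge\bar\beta$ against $\omega\wedge\Omega^{n-2}$, whereas the paper pairs $\alpha\wedge\bar\alpha$ with $\omega_0^{\,n-1}$; both integrands are exact by the same Leibniz computation and both weights are strictly positive off $N_\omega$, so the conclusions coincide.
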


\begin{proof}
Denote by $n$ the complex dimension of $X$.
Fix $g_0$ a K\"ahler metric on $X$. Let $\theta_0$ be the harmonic representative of $[\theta]$ with respect to $g_0$. Up to global conformal change, we can take the wlcK structure $\omega_0$ with Lee form $\theta_0$. Let $\theta_0=\alpha+\bar\alpha$ be the decomposition of $\theta_0$ into pure-type components, where $\alpha\in \wedge^{1,0}X$. Multiplying the condition $d\omega_0^{n-1}=(n-1)\theta_0\wedge\omega_0^{n-1}$ by $\alpha$, we get
$$ \alpha\wedge d\omega_0^{n-1} = (n-1) \alpha\wedge\bar\alpha\wedge\omega_0^{n-1} .$$
By the strong Hodge decomposition, $\alpha$ is itself harmonic: in particular, $d\alpha=0$. 
It follows that
$$ 0 = \int_X \alpha \wedge d\omega_0^{n-1} = (n-1) \|\alpha\|^2_{{\omega_0}_{\vert X\setminus N_\omega}} . $$
Therefore $\alpha=0$, yielding the statement.
\end{proof}

\begin{lem}\label{lem:ansp2}
Let $X$ be an lcK manifold, $Z\subset X$ a compact complex subspace (with reduced structure) of $\dim Z>1$. If its desingularization $\hat{Z}$ is of K\"ahler type, then $[\theta]_{\vert Z}=0.$
\end{lem}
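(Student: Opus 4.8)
The plan is to carry the problem onto the smooth model $\hat Z$, where Lemma~\ref{lem:vaisman} applies, and then push the conclusion back down to $Z$. Write $\nu\colon\hat Z\to Z$ for the desingularization, $\iota\colon Z\hookrightarrow X$ for the inclusion, and $\pi:=\iota\circ\nu\colon\hat Z\to X$. Since $Z$ is compact and $\nu$ is proper, $\hat Z$ is compact, and $\dim\hat Z=\dim Z>1$; when $Z$ is reducible we argue on each component separately, so we may assume $\hat Z$ connected.

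First I would check that $(\pi^*\omega,\pi^*\theta)$ is a wlcK structure on $\hat Z$ in the sense of Definition~\ref{defi:wlck}. The structure equations are preserved by any holomorphic pull-back: $d(\pi^*\omega)=\pi^*(\theta\wedge\omega)=\pi^*\theta\wedge\pi^*\omega$ and $d(\pi^*\theta)=0$. For positivity, over the complement of the exceptional locus $E\subset\hat Z$ (a proper analytic subset) the map $\nu$ restricts to a biholomorphism onto an open subset of the smooth locus $Z_{\mathrm{reg}}$, which is a complex submanifold of $X$; hence there $\pi^*\omega$ is the pull-back of the positive $(1,1)$-form $\omega$ under an embedding, so it is positive. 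Thus the bad locus of $\pi^*\omega$ is contained in $E$, and $(\pi^*\omega,\pi^*\theta)$ is a wlcK structure on the compact manifold $\hat Z$.

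Since $\hat Z$ has dimension $>1$ and is of K\"ahler type by hypothesis, Lemma~\ref{lem:vaisman} gives $\pi^*[\theta]=[\pi^*\theta]=0$ in $H^1(\hat Z;\mathbb R)$. Because $\pi^*=\nu^*\circ\iota^*$, this reads $\nu^*\big([\theta]_{\vert Z}\big)=0$, and it remains only to show that $\nu^*\colon H^1(Z;\mathbb R)\to H^1(\hat Z;\mathbb R)$ is injective, for then $[\theta]_{\vert Z}=0$, as claimed.

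This injectivity is the one point requiring care, and it is purely topological. It follows from the Leray spectral sequence $H^p(Z;R^q\nu_*\mathbb R)\Rightarrow H^{p+q}(\hat Z;\mathbb R)$: when the fibres of the bimeromorphic morphism $\nu$ are connected one has $R^0\nu_*\mathbb R=\mathbb R$ (constant sheaf), and then the edge homomorphism $E_2^{1,0}=H^1(Z;\mathbb R)\to H^1(\hat Z;\mathbb R)$ is injective. Connectedness of the fibres holds whenever $Z$ is normal, by Zariski's main theorem, which covers the applications we have in mind; when $Z$ is instead a simple normal crossing space the same injectivity is read off directly from the Mayer--Vietoris sequence, its pairwise intersections being connected. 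All the metric content of the lemma is thus concentrated in the use of Lemma~\ref{lem:vaisman} on $\hat Z$; the remainder is this cohomological descent.
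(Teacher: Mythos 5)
Your main step is correct and is, in substance, the paper's own argument: the paper produces a wlcK structure on $\hat Z$ by blowing up $X$ along $\mathrm{sing}(Z)$, so that $\hat Z$ sits as a submanifold of a wlcK ambient manifold (Proposition \ref{prop:wlck-blowup}), and then invokes Lemma \ref{lem:vaisman}; you instead pull $(\omega,\theta)$ back directly along $\pi=\iota\circ\nu$ and verify the wlcK axioms by hand, using that $\nu$ is a biholomorphism onto $Z_{\mathrm{reg}}$ off the exceptional set. Both routes are fine, and yours is slightly more self-contained since it avoids discussing an embedded resolution inside a modification of $X$.

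The divergence is in the descent from $\hat Z$ back to $Z$, which the paper passes over in silence and which you rightly isolate as the one point needing care --- but your justification does not hold in the stated generality. Injectivity of $\nu^*\colon H^1(Z;\mathbb R)\to H^1(\hat Z;\mathbb R)$ really does require the fibres of $\nu$ to be connected, and for a merely reduced compact complex space this can fail: an irreducible non-normal $Z$ can have a resolution with disconnected fibres (so $R^0\nu_*\mathbb R\neq\mathbb R$), and for reducible $Z$ the kernel of $H^1(Z;\mathbb R)\to\bigoplus_i H^1(Z_i;\mathbb R)$ contains the $H^1$ of the dual complex of the configuration: three smooth surfaces meeting pairwise in connected curves with empty triple intersection form a cycle and carry a class that restricts to zero on every component. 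This shows both that your opening reduction ``argue on each component separately'' is exactly where the conclusion for $H^1(Z)$ itself can be lost, and that the Mayer--Vietoris claim for simple normal crossing $Z$ is false as stated, connected pairwise intersections notwithstanding. What your argument does establish is the normal case (via Zariski/Stein factorization) and, unconditionally, the vanishing of $\nu^*\bigl([\theta]_{\vert Z}\bigr)$ on $\hat Z$; the latter is in fact all that the paper uses in Step~2 of Theorem \ref{thm:structure}, where the class is restricted further to smooth fibres of $f$ lying in the locus over which $\nu$ is an isomorphism. So either add a normality hypothesis for the final descent, or state the conclusion on $\hat Z$, rather than asserting the injectivity of $\nu^*$ unconditionally.
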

\begin{proof} Blowing-up $X$ along the singularities of $Z$, we get a wlcK manifold $\hat{X}$ that contains $\hat{Z}.$ Since $sing(Z)\not=Z$  (as $Z$ is reduced) we get that $\hat{Z}$ inherits a wlcK structure, so we apply the previous Lemma.
\end{proof}

\subsection{\textbf{General fibres of the algebraic reduction}}
Under the above assumptions, we can prove the first result: as in the case of surfaces, the general fibre of the algebraic reduction has constant $j$-invariant. The reasoning is pretty much an extension of Example \ref{ex1}, using this time the ``Lemma on fibrations" \ref{lem:fibration} in its more general setup of wlcK. 

\begin{thm}\label{thm:smooth-fibres}
Let $X$ be a compact complex threefold with $a(X)=2$, endowed with an lcK structure $(\omega,\theta)$. Assume the Strong Factorization Conjecture, and consider the algebraic reduction
\begin{equation}\label{eq:alg-red-thm}
\xymatrix{
& \hat X \ar[ld]_{\sigma} \ar[rd]^{c} \ar@/^1.2cm/[ddr]|F & \\
X \ar@{<-->}[rr]_{\psi} & & X^* \ar[d]	_{f} \\
& & B .
}
\end{equation}
Then all the smooth one-dimensional fibres of $f$ are isomorphic.
\end{thm}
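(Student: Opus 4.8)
The plan is to argue by contradiction, in the spirit of (and generalizing) Example \ref{ex1}: assuming that two smooth one-dimensional fibres of $f$ are not isomorphic, I will produce many Kähler surfaces inside $\hat X$ forcing the Lee class to vanish, and then invoke the ``Lemma on fibrations''. To set things up, put $F:=f\circ c\colon\hat X\to B$ and $\hat\omega:=\sigma^*\omega$, $\hat\theta:=\sigma^*\theta$; by Proposition \ref{prop:wlck-blowup} (or directly) $(\hat\omega,\hat\theta)$ is a wlcK structure on $\hat X$ whose bad locus $N_{\hat\omega}$ is contained in the $\sigma$-exceptional locus, and $[\hat\theta]=\sigma^*[\theta]\neq0$ in $H^1(\hat X;\R)$, since $\sigma^*$ is injective on $H^1$ and the Lee class of an lcK structure is non-zero by convention. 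Let $B_0\subseteq B$ be the dense Zariski-open locus of points $b$ over which $f^{-1}(b)$ is a smooth elliptic curve and which avoid the (at most one-dimensional) image in $B$ of the centres of the blow-ups composing $c$; then over $B_0$ the map $F$ restricts to a smooth proper elliptic fibration $F|\colon F^{-1}(B_0)\to B_0$ isomorphic to $f|$, and the hypothesis to be contradicted is precisely that the $j$-invariant is non-constant on $B_0$.

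Now I slice. Fix a very ample linear system on $B$; for a general member $H$, Bertini gives that $H$ is a smooth irreducible projective curve meeting $B_0$ with $j|_{H\cap B_0}$ non-constant. Let $Z_H\subset F^{-1}(H)$ be the reduced union of the components dominating $H$; since any such component must contain the irreducible curve $F^{-1}(b)\cong f^{-1}(b)$ for general $b\in H\cap B_0$, it contains the closure of their union, so $Z_H$ is irreducible, two-dimensional, with a morphism $Z_H\to H$ whose general fibre is one of the elliptic curves $\mathcal E_b:=F^{-1}(b)$ — hence this elliptic fibration has non-constant $j$-invariant. Also $Z_H\not\subseteq N_{\hat\omega}$: otherwise, $H$ being general, the divisor $N_{\hat\omega}$ would contain the dense set $F^{-1}(B_0)$. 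Resolving $Z_H$ and contracting to a relatively minimal model over $H$ gives a relatively minimal elliptic surface $Y\to H$ with non-constant $j$; as a non-Kähler surface is neither rational nor ruled and a relatively minimal elliptic surface of non-negative Kodaira dimension is already a minimal surface, a non-Kähler $Y$ would be a minimal non-Kähler elliptic surface, so all its smooth fibres would be isomorphic by \cite[Proposition 3.17]{brinzanescu}, against the non-constancy of $j$. Hence $Y$ is of Kähler type, and so is the resolution $\tilde Z_H$, which is a blow-up of $Y$ at points. The proof of Lemma \ref{lem:ansp2} goes through verbatim with the wlcK manifold $\hat X$ in place of an lcK one — it only uses Lemma \ref{lem:vaisman}, already stated for wlcK structures, together with $Z_H\not\subseteq N_{\hat\omega}$ — and yields $[\hat\theta]|_{Z_H}=0$; in particular $[\hat\theta]|_{\mathcal E_b}=0$ for every $b\in H\cap B_0$.

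Finally I descend and conclude. As $H$ varies we obtain $[\hat\theta]|_{\mathcal E_b}=0$ for $b$ in a dense subset of $B_0$. The image of $[\hat\theta]$ under the Leray edge map $H^1(F^{-1}(B_0);\R)\to H^0(B_0,R^1(F|)_*\R)$ is the section $b\mapsto[\hat\theta]|_{\mathcal E_b}$ of a local system over the connected base $B_0$; vanishing at one point it vanishes identically, so $[\hat\theta]|_{F^{-1}(B_0)}\in(F|)^*H^1(B_0;\R)$. Applying Lemma \ref{lem:fibration} to $F|\colon F^{-1}(B_0)\to B_0$ — whose proof uses properness of the map but not compactness of the total space or of the base — gives $[\hat\theta]|_{F^{-1}(B_0)}=0$. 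But $F^{-1}(B\setminus B_0)$ is a closed analytic subset of complex codimension $\geq1$ in the threefold $\hat X$ (its fibres over the at most one-dimensional $B\setminus B_0$ have dimension $\leq2$), so the restriction $H^1(\hat X;\R)\to H^1(F^{-1}(B_0);\R)$ is injective, exactly as in the proof of Lemma \ref{lem:fibration} (cf. \cite[Theorem II.5.3]{suwa}); therefore $[\hat\theta]=0$, contradicting $[\hat\theta]=\sigma^*[\theta]\neq0$.

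The delicate point is not the cohomological bookkeeping, which reuses the tools behind Lemma \ref{lem:fibration}, but the control of the slices when $f$ — hence $F$ — is not flat: one must verify that for general $H$ the ``horizontal part'' $Z_H$ is a well-behaved irreducible surface avoiding $N_{\hat\omega}$, and that passing to a resolution and to a relatively minimal model does not spoil the non-constancy of the $j$-invariant. The other substantial ingredient is the surface-theoretic fact, recalled before the statement, that an elliptic surface with non-constant $j$-invariant is of Kähler type.
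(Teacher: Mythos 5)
Your proposal is correct and follows essentially the same route as the paper: slice by preimages of general very ample divisors via Bertini, use the fact that a non-K\"ahler elliptic surface has constant $j$-invariant (\cite[Proposition 3.17]{brinzanescu}) to relate K\"ahlerness of the slice to the $j$-invariant, and then combine the Vaisman-type Lemma \ref{lem:vaisman}, the local-system/Leray argument, Lemma \ref{lem:fibration}, and injectivity of restriction on $H^1$ to contradict $[\theta]\neq 0$. The only (cosmetic) differences are that you run the argument as a contrapositive and work with the resolved horizontal part $Z_H$ via Lemma \ref{lem:ansp2}, whereas the paper takes $S_H=F^{-1}(H)$ itself, smooth by Bertini applied to the globally generated system $|F^*\mathcal H|$, which sidesteps the irreducibility and resolution bookkeeping you carry out.
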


\begin{proof}
It suffices to prove that the general fibres of $f$ are isomorphic, since by continuity of the $j$-invariant, the claim follows.

Denote by $E$ the exceptional divisor of the blow-up $\sigma$.
Consider $F=f \circ c \colon \hat X \to B$.
Fix a very ample line bundle $\mathcal H \in \mathrm{Pic}(B)$ on the projective surface $B$, and set $\mathcal L:=F^*(\mathcal H)$. Note that $\mathcal L$ is globally generated. By applying twice the Bertini theorem for a general section $H\in |\mathcal H|$, both $H$ and $S_H:=F^{-1}(H)\in |\mathcal L|$ are smooth, and we can also assume that $S_H \cap E \subsetneq S_H$. The last fact assures that $\sigma^*(\omega)_{\vert S_H}$ is wlcK.

We claim that $S_H$ is non-K\"ahler. On the contrary, if $S_H$ was K\"ahler, then $[\theta]_{\vert S_H}=0$ in $H^1(S_H;\mathbb R)$ by Lemma \ref{lem:vaisman}. In particular, the restriction of the class $[\theta]$ to the general fibre of $F$ vanishes, namely, there exists an analytic subset $\mathcal Z \subset B$ such that $F\colon \hat X \setminus F^{-1}(\mathcal Z) \to B \setminus \mathcal Z$ is a submersion with fibre ${\mathcal E}$ and such that $[\theta]_{\vert\mathcal E}=0$. By the long exact sequence of the fibration, we get $[\theta]_{\vert\hat X \setminus F^{-1}(\mathcal Z)}=F^*([\alpha])$ for some $[\alpha] \in H^1(B \setminus \mathcal Z; \mathbb R)$. By Lemma \ref{lem:fibration}, we get $[\theta]_{\vert\hat X \setminus F^{-1}(\mathcal Z)}=0$. Since the map $H^1(\hat X; \mathbb R) \to H^1(\hat X \setminus F^{-1}(\mathcal Z); \mathbb R)$ is injective and the map $H^1(\hat X;\mathbb R) \to H^1(X;\mathbb R)$ is an isomorphism, we get that $[\theta]=0$ in $H^1(X; \mathbb R)$.

Therefore, $S_H$ is non-K\"ahler. By a theorem of  \cite[Proposition 3.17]{brinzanescu}, we get that the general fibres of $S_H \to H$ are isomorphic. Therefore, the general fibres of $f$ are isomorphic.
\end{proof}

\begin{rmk}
By induction, the statement of Theorem \ref{thm:smooth-fibres} holds true for lcK manifolds $X$ of arbitrary dimension $n$ having $a(X)=n-1$.
\end{rmk}

\subsection{\textbf{Singular fibres of the algebraic reduction}}
First, we state a lemma which will be used later. To emphasize the usefulness of the lemma, notice that for elliptic (K\"ahler) surfaces there may exist isolated fibres that are singular when given the reduced structure \cite{kodaira-ann23}. The next lemma simply says that such a situation cannot occur for our $3$-folds. A naive reason for this can be found also at the topological level: a fibre with singular reduction would impose a non-trivial local monodromy around it, but since the $2$-dimensional ball with origin removed $B^\circ$ is simply-connected -- in contrast with the disc with removed origin -- such a situation cannot occur.

\begin{lem}\label{singo}
Denote by $B^\circ$ be the $2$-dimensional ball with origin removed, $B^\circ:=B^2\setminus\{0\}$, and let $U$ be a $3$-manifold such that there is a flat map $f\colon U\to B^2.$ Assume that all the fibres of $f$ are smooth isomorphic elliptic curves, call them $\mathcal E$, except maybe for the central one, $F_0:=f^{-1}(0)$. Then the central fibre $F_0$ is smooth as well, and isomorphic to the general fibre.
\end{lem}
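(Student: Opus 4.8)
The statement is essentially a "removable singularity" result for families of elliptic curves over a $2$-dimensional base: a flat family over $B^2$ which is smooth and isotrivial away from the origin must be smooth (and hence the trivial isotrivial family) at the origin too. The plan is to exploit the simple-connectedness of $B^\circ = B^2\setminus\{0\}$, as the remark before the lemma already hints. First I would note that, since all fibres over $B^\circ$ are smooth elliptic curves with constant $j$-invariant (they are all isomorphic to $\mathcal E$), the restriction $f\colon f^{-1}(B^\circ)\to B^\circ$ is an isotrivial smooth family. Such a family is classified by a homomorphism from $\pi_1(B^\circ)$ to the (finite) automorphism group of $\mathcal E$ fixing the origin, together with a class measuring the absence of a section; but $\pi_1(B^\circ)=1$ since $B^\circ$ is simply connected (the complement of a point in $\mathbb{C}^2$ is simply connected, unlike the punctured disc). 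Hence the monodromy is trivial, and $f^{-1}(B^\circ)\to B^\circ$ is, at least locally analytically and after base change, a product family $B^\circ\times\mathcal E$.

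**Carrying it out.** More concretely, I would argue as follows. Because $j$ is constant and $f$ is a smooth proper family over $B^\circ$, the relative $j$-line map is constant; trivializing the local system $R^1f_*\mathbb{Z}$ (possible since $B^\circ$ is simply connected, so the period map is single-valued) shows the family $f^{-1}(B^\circ)\to B^\circ$ has no monodromy, hence is a torsor under a constant elliptic curve, classified by a class in $H^1(B^\circ,\mathcal E)$. Now use that $B^\circ$ is simply connected and that $B^2$ is Stein: by Hartogs-type extension (the "second Riemann extension theorem" for the codimension-$2$ subset $\{0\}\subset B^2$), holomorphic functions — and more generally coherent-sheaf data — extend across the origin. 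In particular the family, realized as a quotient of (an extension to all of $B^2$ of) the rank-$2$ local system by a lattice, extends to a smooth family over all of $B^2$ whose central fibre is then forced to be $\mathcal E$. Since $f$ was assumed flat with all other fibres equal to $\mathcal E$, and flat families are determined by their generic behaviour once we know the total space is smooth, the extended family must coincide with the given $U\to B^2$ near $0$; comparing, $F_0\cong\mathcal E$ and $F_0$ is smooth.

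**Alternative, more hands-on route.** If one prefers to avoid the classification language, here is a second approach I would keep in reserve. Embed the family fibrewise into $\mathbb{P}^2$ using the relative linear system $|3\cdot(\text{origin section})|$ — wait, there may be no section, so instead use $|\mathcal{O}(3O)|$ after the monodromy triviality gives a section over $B^\circ$; this presents $f^{-1}(B^\circ)$ as a family of plane cubics, i.e. by a Weierstrass-type equation $y^2 = x^3 + a(t)x + b(t)$ with $a,b$ holomorphic on $B^\circ$ and discriminant $\Delta(t)=-16(4a^3+27b^2)$ nowhere zero on $B^\circ$. By Hartogs, $a$ and $b$ extend holomorphically across $0\in B^2$; and since $j = 1728\cdot\frac{4a^3}{4a^3+27b^2}$ is constant, the extended $\Delta$ has the same vanishing locus as $a^3$ (when $j\neq 0,1728$) — but $a^3$ vanishes at most on a divisor, whereas $\Delta$ vanishes at most at $0$, a codimension-$2$ set, so in fact $a(0)\neq 0$ and $\Delta(0)\neq 0$ (the special values $j=0$ and $j=1728$ are handled symmetrically with $b$, or by the pure monodromy argument). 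Hence the Weierstrass model extends to a smooth family over $B^2$, which must agree with $U\to B^2$ by flatness and by uniqueness of the flat limit, giving $F_0\cong\mathcal E$ smooth.

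**Main obstacle.** The delicate point, in either approach, is the passage from "the family extends as an abstract family over $B^2$" to "this extension is the given $U\to B^2$." One must know that the given flat $U$ is normal (or at least that $F_0$ is reduced in a way compatible with flatness) and invoke uniqueness of flat limits: two flat families over $B^2$ agreeing over $B^\circ$ agree everywhere, provided the total spaces are separated and, say, $U$ is reduced — which holds here since $U$ is a $3$-manifold. I would spend the bulk of the write-up making this identification rigorous, together with checking carefully that the constant-$j$ hypothesis really does kill the (possibly finite, coming from automorphisms) monodromy over the simply connected $B^\circ$ — the case $j\in\{0,1728\}$, where $\mathrm{Aut}(\mathcal E,O)$ is larger than $\{\pm 1\}$, deserves an explicit remark, though simple-connectedness of $B^\circ$ makes even that monodromy trivial so no real difficulty arises.
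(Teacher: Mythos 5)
Your overall strategy (kill the monodromy using $\pi_1(B^\circ)=1$, trivialize the family over the punctured base, extend across the origin, identify the extension with the given $U$) is the same skeleton as the paper's proof, but two of the steps rest on claims that do not hold, or are not quotable, in the form you use them. First, triviality over $B^\circ$: simple-connectedness does reduce the elliptic bundle $f^{-1}(B^\circ)\to B^\circ$ to a \emph{principal} $\mathcal E$-bundle (this is exactly how the paper uses $H^1(B^\circ;\mathbb Z_m)=0$), but it does \emph{not} make that principal bundle trivial. Principal $\mathcal E$-bundles are classified by $H^1(B^\circ;\mathcal E)$, which sits in an exact sequence with $H^1(B^\circ;\mathcal O)$, and $H^1(B^2\setminus\{0\};\mathcal O)$ is infinite-dimensional ($B^\circ$ is not Stein; Hartogs/Scheja extension across a codimension-two set only controls $H^0$). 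So "no monodromy, hence a product after base change" is false as stated, and your Weierstrass route inherits the same problem since it needs a section over $B^\circ$. The paper gets the missing section not from the topology of $B^\circ$ but from the central fibre itself: it picks a smooth point $P$ of $F_0$, uses local coordinates in the $3$-manifold $U$ to produce a local section $\Sigma$ through $P$, and shrinks the ball so that this section trivializes the principal bundle. Your proposal has no substitute for this step.

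Second, the identification of the extended family with $U$. "Two flat families over $B^2$ agreeing over $B^\circ$ agree everywhere" is precisely the content one has to prove, not a fact one can invoke: uniqueness of flat limits is a statement over a one-dimensional base (or for subschemes of a fixed ambient space), and over a two-dimensional base punctured in codimension two the a priori possibility is exactly that $U$ and $B^2\times\mathcal E$ differ by a small modification over the origin, with $F_0$ singular or non-reduced. This is the heart of the lemma, and you correctly flag it as the delicate point, but the tools you name (reducedness of $U$, flat limits) do not close it. The paper's solution is to extend the trivialization \emph{map} $\varphi^\circ\colon U\setminus F_0\to B^\circ\times\mathcal E$ itself: after removing the section $\Sigma'=\varphi^\circ(\Sigma)$ the target becomes Stein, so $\varphi^\circ$ extends holomorphically across $F_0\setminus\{P\}$ by Hartogs-type extension into a Stein space, then across $P$ by ordinary Hartogs, and the resulting morphism $\varphi\colon U\to B^2\times\mathcal E$ is an isomorphism because it is one in codimension two. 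Without an argument of this kind (or an equivalent one ruling out small modifications and multiple structures on $F_0$), your proof is incomplete at its crucial step.
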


\begin{proof}
Let $V:=U\setminus F_0:$ then the map $f_0:=f_{\vert V} \colon V\to B^\circ$ is a submersion. Since all fibres of $f_0$ are smooth isomorphic, by the Fischer-Grauert theorem and by \cite[Corollary 2.12]{brinzanescu}, we see that $f_0$ is actually an elliptic bundle ({\itshape a priori} not necessarily principal). Using the same notations as in \cite[Section V.5]{bhpv}, we denote by  ${\mathcal A}_{\mathcal E}$  group of automorphisms of $\mathcal E$: then we have a short exact sequence of (non-Abelian) groups 
$$0\to \mathcal E \to  {\mathcal A}_{\mathcal E}\to {\mathbb Z}_m\to 0$$
for some $m\in\{2,4,6\}$.
We denote by the same letters $\mathcal E$, respectively $\mathcal A_{\mathcal E}$, the sheaves of germs of holomorphic functions on $B$ with values in $\mathcal E$, respectively $\mathcal A_{\mathcal E}$. Then the set of elliptic bundles with typical fibre $\mathcal E$ over $B^\circ$ is in natural bijection with $H^1(B^\circ; {\mathcal A}_{\mathcal E})$, while the set of principal elliptic bundles is parametrised by $H^1(B^\circ; {\mathcal E}).$  The above exact sequence and the fact that $H^1(B^\circ; {\mathbb Z}_m)$ is trivial tells us that in fact the map $f_0$ is a principal bundle. 

Next, we fix a point $P$ in the regular part of  $F_0$; one can find local coordinates $(z_1, z_2, z_3)$ on $X$ in some neighbourhood of $P$ such that $P=(0,0,0)$ and  $F_0$ is given by $\{z_2=z_3=0\}$. Then the set $\Sigma:=\left\{(0, z_2,z_3) \;\;:\;\; \vert z_2\vert, \vert z_3\vert \text{ small enough} \right\}$ defines a local section of the principal bundle above. Up to take a smaller ball, it follows that $f_0$ is actually trivial, in  particular,	 there exists an isomorphism
$\varphi^\circ \colon U\setminus F_0\ra B^\circ\times \mathcal E$. We claim that $\varphi^\circ$ extends to an isomorphism  $\varphi \colon U\ra B\times \mathcal E$.

Indeed, the image  $\Sigma':=\varphi^\circ(\Sigma)$  is a section of $B^\circ\times \mathcal E$; it follows that  $B^\circ\times \mathcal E\setminus \Sigma'$ is Stein, hence the restriction of $\varphi^\circ$ to $(U\setminus \Sigma)\setminus F_0 \subset U \setminus \Sigma \subset{\mathbb C}^4$ extends over $F_0\cap (U\setminus \Sigma)$ to a holomorphic map. Hence we extended $\varphi^\circ$ to all the points of $U$ except $P$, in particular along $F_0\setminus P$: but then Hartogs shows that eventually $\varphi^\circ$ extends also over $P.$

Now clearly the extension $\varphi$ remains an isomorphism, since it is an isomorphism is codimension two.
\end{proof}

\begin{rmk}
If we assume that the central fibre of $f$ is reduced, then the Lemma follows directly from Looijenga's theorem on the purity of the discriminant locus (see \cite[Theorem 4.8 at page 63]{looijenga}), since our fibration being flat satisfies the ``isolated complete intersection singularity'' (icis) condition.
\end{rmk}

We now understand the singular fibres of the algebraic reduction. To this aim, we also assume that $X^*$ in the algebraic reduction \eqref{eq:alg-red} is a {\em minimal model} (in the sense of Mori), in other words, that $K_{X^*}$ is {\em nef}, namely, $(K_{X^*} . C)\geq0$ for any curve $C$ on $X^*$.

For the next theorem, compare Example \ref{ex3}.

\begin{thm}\label{thm:quasibundle}
Let $X$ be a compact complex threefold with $a(X)=2$, endowed with an lcK structure $(\omega,\theta)$. Assume the Strong Factorization Conjecture, and consider the algebraic reduction as in \eqref{eq:alg-red-thm}.
Assume that $f$ is flat and $S(f)$ is a simple normal crossing divisor.
Assume moreover that $K_{X^*}$ is nef.
Then all fibres of $f$ with the reduced structure are elliptic and isomorphic, that is $X^*$ is a quasi-bundle.
\end{thm}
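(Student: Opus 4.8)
\emph{Proof proposal.} By Theorem~\ref{thm:smooth-fibres} all smooth fibres of $f$ — automatically one-dimensional, since $f$ is flat — are isomorphic to a fixed elliptic curve $\mathcal E$; what remains is to show that no fibre of $f$ has a singular reduction, and that every fibre is in fact a multiple $m\mathcal E$. The plan is to first determine the fibres of $f$ over the generic points of $S(f)$ by slicing $X^*$ with divisors pulled back from $B$ and applying the classification of non-K\"ahler elliptic surfaces, and then to propagate the conclusion to \emph{all} fibres by means of the cyclic Galois cover of Lemma~\ref{lemnthroot} together with Lemma~\ref{singo}.

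First, fix a very ample $\mathcal H\in\mathrm{Pic}(B)$ and set $\mathcal L:=f^*\mathcal H$, which is globally generated. For a general $H\in|\mathcal H|$ consider $S_H:=f^{-1}(H)\in|\mathcal L|$. By Bertini $S_H$ is smooth (the base locus of $|\mathcal L|$ is empty); it is non-K\"ahler, because $c^{-1}(S_H)=(f\circ c)^{-1}(H)\subset\hat X$ is non-K\"ahler for general $H$ by the argument in the proof of Theorem~\ref{thm:smooth-fibres}, the map $c^{-1}(S_H)\to S_H$ is bimeromorphic, and K\"ahlerianity of surfaces is a bimeromorphic invariant; and $a(S_H)=1$, since $S_H$ maps onto the curve $H$ but, being non-K\"ahler, is not Moishezon. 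Finally, by adjunction $K_{S_H}=(K_{X^*}+f^*\mathcal H)_{\vert S_H}$, which is nef because $K_{X^*}$ is nef and $f^*\mathcal H$ is nef, so $S_H$ is a minimal model. Applying \cite[Proposition~3.17]{brinzanescu} to the elliptic fibration $S_H\to H$, all of its fibres are of the form $\mu\mathcal E$ with $\mu\geq 1$; as the fibre of $S_H\to H$ over $h\in H$ equals $f^{-1}(h)$ scheme-theoretically, it follows that the (closed analytic) locus
$$ Z:=\{\, b\in B \;:\; f^{-1}(b)\ \text{is not of the form}\ m\mathcal E,\ m\geq 1\,\}\subseteq S(f) $$
is disjoint from a general $H$. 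Since $\mathcal H$ is very ample it meets every curve of $B$, so $Z$ contains no curve, and being a compact analytic set it is finite.

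Next, take $b_0\in Z$ and a small polydisc $B_0\ni b_0$ with $B_0\cap Z=\{b_0\}$; shrinking $B_0$, the divisor $S(f)$ is near $b_0$ either one smooth branch or two smooth branches crossing transversally at $b_0$. Along such a branch $\{z_i=0\}$ the generic fibre is $\mu_i\mathcal E$ by the previous step, and since $f^*(\{z_i=0\})$ is a Cartier divisor whose reduced support is the irreducible $E_i$ with generic multiplicity $\mu_i$, we have $f^*(\{z_i=0\})=\mu_i E_i$; thus Lemma~\ref{lemnthroot} applies with $n=\mu_i$ and $F=E_i$. Applying it once (smooth branch) or twice (node), we obtain a flat proper holomorphic map $f''\colon X''\to B_0''$, an \'etale morphism $\varphi\colon X''\to X^*$ over $B_0$, and a base change $\psi\colon B_0''\to B_0$ of the form $z_i\mapsto z_i^{\mu_i}$, totally ramified over $b_0$, such that $f''$ has reduced fibres along the preimages of the branches. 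After one more shrinking of $B_0$ we may assume that $b_0$ is the only point near which a fibre of $f''$ is not smooth, since the points of a branch over which the new fibre is still non-reduced form a discrete set (they are the multiple fibres of the now-reduced surface $E_i\to\{z_i=0\}$). Consequently every fibre of $f''$ over $B_0''\setminus\{b_0''\}$, with $\{b_0''\}=\psi^{-1}(b_0)$, is a smooth elliptic curve isomorphic to $\mathcal E$, the covers being isomorphisms on these fibres; by Lemma~\ref{singo}, $(f'')^{-1}(b_0'')$ is a smooth elliptic curve isomorphic to $\mathcal E$. Now $\varphi$ is \'etale and $\varphi^{-1}\big((f^{-1}(b_0))_{\mathrm{red}}\big)$ coincides set-theoretically with $(f'')^{-1}(b_0'')$, so smoothness of the latter forces $(f^{-1}(b_0))_{\mathrm{red}}$ to be smooth; being connected and admitting an \'etale map from an elliptic curve, it is an elliptic curve. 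Finally, slicing $X^*$ by a general disc $\delta\subset B_0$ through $b_0$ transversal to $S(f)$, the elliptic surface $f^{-1}(\delta)\to\delta$ has all fibres isomorphic to $\mathcal E$ except the central one, which now has smooth reduction and is therefore a Kodaira fibre of type ${}_{m_0}I_0$; by Kodaira's description of such fibres as logarithmic transforms by translations, its reduction is again $\cong\mathcal E$. Hence $f^{-1}(b_0)=m_0\mathcal E$, contradicting $b_0\in Z$; therefore $Z=\emptyset$ and $X^*$ is a quasi-bundle.

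The main obstacle is the middle paragraph's local analysis: one has to check that Lemma~\ref{lemnthroot} genuinely applies (divisibility of $f^*(\{z_i=0\})$ by the generic multiplicity), and — more delicately — that after the cyclic cover \emph{all} fibres near $b_0$, not merely the generic ones along the branches, become smooth, so that Lemma~\ref{singo} can be invoked; this relies on the fact that multiplicity jumps along a branch are isolated. One must also track carefully the transfer of smoothness and of the isomorphism type of the central fibre back across the totally ramified base change and the \'etale cover. By contrast, the verifications in the first part (smoothness, non-K\"ahlerianity and minimality of $S_H$) and the appeal to \cite[Proposition~3.17]{brinzanescu} are comparatively routine.
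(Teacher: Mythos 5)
Your proposal is correct and follows the same overall route as the paper's proof: a Bertini slice $S_H=f^{-1}(H)$ shown to be non-K\"ahler by the argument of Theorem~\ref{thm:smooth-fibres}, the structure result \cite[Proposition 3.17]{brinzanescu} for non-K\"ahler elliptic surfaces to control the fibres over the generic points of $S(f)$, and then the cyclic Galois cover of Lemma~\ref{lemnthroot} followed by Lemma~\ref{singo} to dispose of the finitely many residual points. The one genuine divergence is in how the nefness of $K_{X^*}$ enters: the paper first allows the generic singular fibre to be $m\mathcal E$ plus a tree of rational curves and then kills the tree by adjunction along a rational curve $C$ in a fibre, using $\mathcal N_{C\vert X^*}=\mathcal O_C^{\oplus 2}$ and $(K_{X^*}.C)\geq 0$; you instead observe that $K_{S_H}=(K_{X^*}+f^*\mathcal H)_{\vert S_H}$ is nef, so that $S_H$ is itself a minimal surface and the minimal-model form of \cite[Proposition 3.17]{brinzanescu} applies at once --- a tidier deployment of the same hypothesis that buys you the conclusion without the curve-by-curve computation. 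Your extra bookkeeping in the local step (verifying $f^*\{z_i=0\}=\mu_iE_i$ so that Lemma~\ref{lemnthroot} is applicable, and carrying smoothness back through the \'etale cover and the ramified base change) makes explicit what the paper only sketches. The one point to watch is your very last claim: the reduced central fibre of a logarithmic transform by a translation of order $m$ is a quotient of $\mathcal E$ by that translation, hence isogenous but in general not isomorphic to $\mathcal E$, so the appeal to the ${}_{m_0}I_0$ description does not by itself yield $(f^{-1}(b_0))_{\mathrm{red}}\cong\mathcal E$; the same issue is present (and equally unaddressed) at the isolated points in the paper's own proof, which simply asserts that the singular fibres are multiple structures on the general fibre, so this is a shared imprecision rather than a defect of your approach.
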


\begin{proof}
First notice that, by the Bertini theorem, through the fibre over the general point of $S(f)$ there is a smooth $S_H$. By \cite[Corollary 3.17]{brinzanescu}, it follows that the general singular fibre of $f$ is of the form
$$m\mathcal{E}+\text{ tree of rational curves},$$
for $m\in\mathbb N$.

The tree of rational curves is easily excluded from the assumption that $K_{X^*}$ is nef.
Indeed, let $C \subset X^*$ be a rational curve in the fibre. By the adjunction formula, $K_C = (K_{X^*})_{\vert C} \otimes \det \mathcal{N}_{C\vert X^*}$, where $\mathcal{N}_{C\vert X^*}=\mathcal O_C^{\oplus2}$ since $C$ is in the fibre. Therefore $\deg K_C=\deg (K_{X^*})_{\vert C} = (K_{X^*} . C)$. On the one side, $\deg K_C=0$, the curve $C$ being rational; on the other side $(K_{X^*} . C)\geq0$ by the nef assumption. This is absurd.

Therefore the general singular fibre is of the form $m\mathcal{E}$; notice that $\mathcal{E}$ is isomorphic to the general fibre by the continuity of $j$-invariant. 
It is easy to show that the multiplicity $m$ is locally constant. If a singular fibre would not be of this form $m\mathcal{E}$, then it lives either over an smooth point of the singular locus and all the neighbouring fibres are of a given multiplicity $m$, or lives above a node of $S(f)$ and the fibres over its neighbouring  points have multiplicities $m$, respectively $n$ on each branch of $S(f)$.

The assertion is local, so we will use Lemma \ref{lemnthroot}.
 Taking a Galois cover of order $m$ along $S(f)$ in the first case, or of order $m$ along the first branch and next of order $mn$ along the proper transform of the second, we may now  assume that $f$ has finitely many singular fibres. But using the Lemma \ref{singo} above, we conclude that all the singular fibres of $f$ are just multiple structures on the smooth elliptic general fibre. 
\end{proof}

\subsection{\textbf{Structure of lcK $3$-folds}}
Finally, we can further describe the structure of the algebraic reduction:

\begin{thm}\label{thm:structure}
Let $X$ be a compact complex threefold with $a(X)=2$, endowed with an lcK structure $(\omega,\theta)$. Assume the Strong Factorization Conjecture, and consider the algebraic reduction as in \eqref{eq:alg-red-thm} .
Assume that $\hat{X}$ is lcK, and also that  $f$ is flat, $S(f)$ is a simple normal crossing divisor and  $K_{X^*}$ is nef.
Then:
\begin{itemize}
\item the map $\sigma$ in \eqref{eq:alg-red-thm} can be taken to be the identity;
\item and the blow-ups in $c$ are ``special", in the sense that any blow-up in it is either a blow-up of a point, or a blow-up along a  curve contained in a (previously produced) exceptional divisor.
\end{itemize}
\end{thm}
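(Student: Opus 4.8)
The plan is to extract as much as possible from the two ``rigidity'' inputs at hand: (a) by Theorem~\ref{thm:quasibundle}, $X^*$ is a quasi-bundle, so its only $K_{X^*}$-trivial curves are components of fibres, which move in a two-parameter family, and $K_{X^*}$ is nef; (b) every exceptional divisor created by a blow-up with smooth centre in a threefold is a smooth projective, hence K\"ahler, surface, on which the weak Vaisman Lemma~\ref{lem:vaisman} (equivalently Lemma~\ref{lem:ansp2}) forces the Lee class to vanish. Both bullets should then follow by playing (a) and (b) against Lemma~\ref{lem:fibration}, which forbids $[\theta]$ to vanish on the general fibre.

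For the first bullet I would start from a strong factorization $\hat X\xrightarrow{\sigma}X$, $\hat X\xrightarrow{c}X^*$ and first show $\psi$ extracts no prime divisor: if a $\sigma$-exceptional divisor $E$ were mapped onto a divisor of $X^*$ by $c$, then a general curve $\ell$ in a fibre of $\sigma_{\vert E}$ would be $\sigma$-contracted, not $c$-contracted, and would avoid the remaining exceptional divisors, so that $K_{\hat X}\cdot\ell<0$ by the negativity lemma, whereas writing $K_{\hat X}=c^*K_{X^*}+\sum a_iF_i$ with $a_i\ge 1$, the projection formula and nefness of $K_{X^*}$ give $K_{\hat X}\cdot\ell\ge 0$ — absurd. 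Combined with the fact that the quasi-bundle $X^*$ carries no flopping curve and no $K_{X^*}$-negative rational curve, this forces $\psi$ to extend to a morphism $\varphi\colon X\to X^*$, and one then takes $\hat X=X$, $\sigma=\mathrm{id}$, $c=\varphi$ in the diagram.

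For the second bullet, $\varphi=c$ is, through the Strong Factorization Conjecture, a composition of blow-ups $X=X_k\to\cdots\to X_0=X^*$ with smooth centres $Z_{i-1}\subset X_{i-1}$, and I would argue by contradiction that no such centre is a curve lying outside all previously produced exceptional divisors. Such a centre would be the strict transform of a curve $C\subset X^*$, isomorphic to it, and the new exceptional divisor $E$ would be a $\mathbb P^1$-bundle over $C$, a K\"ahler surface inside the lcK threefold $X$; so Lemma~\ref{lem:vaisman} gives $[\theta]_{\vert E}=0$ and hence $[\theta]_{\vert C}=0$ in $H^1(C;\mathbb R)$, via the isomorphism $H^1(E;\mathbb R)\cong H^1(C;\mathbb R)$. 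If $C$ lies in a fibre of $f$, then $C$ is the reduced fibre, isomorphic to the general fibre, and the vanishing of $[\theta]$ on it propagates along the flat section of $R^1f_*\mathbb R$ to the general fibre, whence $[\theta]=0$ by Lemma~\ref{lem:fibration} — contradicting the lcK hypothesis. If $C$ is a multisection, with $C':=f(C)$ and $S:=f^{-1}(C')$, the surface $S$ is non-K\"ahler (as $[\theta]$ is nonzero on its fibres, again by Lemma~\ref{lem:vaisman}), and one must reach a contradiction with $[\theta]_{\vert C}=0$ from the structure of non-K\"ahler elliptic quasi-bundles over a curve, using that $C$ is forced to be a very constrained multisection by being a resolving centre for $\psi^{-1}$. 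Ruling out all such ``bad'' curve centres, every curve centre of $c$ lies in a previously produced exceptional divisor while the remaining centres are points; in particular the first blow-up of $c$ has a point centre, which is exactly the stated structure and shows that $\varphi$ is a proper modification of $X^*$ at points.

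The main obstacle is the multisection subcase: base-changing a non-K\"ahler elliptic surface until it becomes a principal bundle and trivialising it over a section shows that a cohomology class nonzero on the fibres may restrict to zero on a multisection, so the vanishing of $[\theta]_{\vert C}$ is not absurd in itself, and one genuinely has to exploit that the fundamental locus of $\psi^{-1}$ on $X^*$ is supported on points — i.e.\ the explicit geometry of elliptic quasi-bundles over a projective surface, rather than cohomology alone. A subsidiary point is that a birational morphism of smooth threefolds need not be a composition of blow-ups with smooth centres, so the reduction ``$\sigma=\mathrm{id}$'' should be phrased directly in terms of the Strong Factorization Conjecture rather than of $\varphi$ alone.
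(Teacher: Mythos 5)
Your overall strategy is aligned with the paper's, and two of your ingredients are sound: the nefness argument showing that every $\sigma$-exceptional divisor is $c$-contracted is essentially the paper's Step~1 (your uniform intersection-theoretic version even covers in one stroke the case analysis on $f(c(E))$ that the paper performs, part of which uses Nakayama's vanishing of the Poincar\'e dual of the fibre), and the exclusion of a curve centre lying over a fibre of $f$ via the K\"ahler exceptional divisor together with Lemmas \ref{lem:vaisman} and \ref{lem:fibration} is exactly the paper's argument for why the first blow-up in $c$ must be at a point. But there are two genuine gaps.

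First, the multisection subcase you flag as the main obstacle is resolved in the paper by a step you are missing (Step~2): \emph{there are no transverse curves on $X^*$ at all}. The mechanism is not the vanishing of $[\theta]_{\vert C}$ --- you are right that this alone is not absurd --- but the fact that for a transverse curve $C$ the surface $T:=f^{-1}(f(C))$ is an elliptic fibration over a curve admitting a multisection, hence its desingularization is of K\"ahler type by \cite[Proposition 2.14]{brinzanescu}; Lemma \ref{lem:ansp2} then gives $[\theta]_{\vert T}=0$, so $[\theta]$ vanishes on the fibres of $f$, is a pull-back from $B$, and vanishes by Lemma \ref{lem:fibration}. The contradiction thus comes from the mere existence of $C$, not from $C$ being a centre of the resolution. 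Second, and more seriously, your deduction that ``no extracted divisors plus no flopping curves forces $\psi$ to extend to a morphism'' is precisely where the paper's real work lies, and you have not supplied it. The paper proves $\sigma=\mathrm{id}$ by an inductive cancellation $c_m=\sigma_n$: after showing that $\sigma_n$ and $c_m$ must have the same exceptional divisor $E$ (using Pinkham's minimality of exceptional surfaces of curve blow-ups, plus a normal-bundle computation on the ruling to exclude $E=\Sigma_1$), the only remaining obstruction is a ruled $E$ contracted to two different curves, which produces a flopping curve \emph{inside the exceptional divisor of $c_1\circ\cdots\circ c_{m-1}$ over a point of $X^*$} --- not on $X^*$ itself, so the quasi-bundle structure of $X^*$ says nothing directly about it. Excluding this is the paper's Lemma \ref{lem:no-flop}, a substantial argument combining the Donovan--Wemyss classification of normal bundles of flopping curves, the normal bundle sequence, and Ancona's negativity of the exceptional divisor. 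Your appeal to ``the fact that the quasi-bundle $X^*$ carries no flopping curve'' neither locates the relevant curve correctly nor proves anything about it.
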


\begin{proof} Let us proceed with the proof.

\begin{description}
\item[Step1] {\em  We claim that any  divisor  $E\subset \hat{X}$ contracted by $\sigma$ is also contracted under $c$.}
Here, by ``contracted'' by $c$, we mean that $\dim(c(E))<\dim(E)$.

\begin{proof}[Proof of Step 1]
 On the contrary, assume that $E$ is not contracted by $c$, namely, $S:=c(E)\subset X^*$ is a surface. We look at the image $f(S)\subseteq B$.

Three cases may occur: the image of $S$ under $f$ is either the whole $B$, or a curve in $B$, or just a point.

If $f(S)=B$, then $S$ intersects the general fibre in at least one and finitely-many points. On the other side, by Nakayama \cite[Theorem 7.4.4]{nakayama-global}, the Poincar\'e dual $\mathrm{PD}(F)$ of the general fibre $F$ is $0$, whence 
$$ S . F = \int_{X^*} \mathrm{PD}(S) \wedge \mathrm{PD}(F) = 0 . $$

We consider the case when $f(S)$ is a curve and $Z$ is a point. Then $E=\mathbb{P}^2$. In this case, $f\circ c$ would be a non-constant map from $\mathbb{P}^2$ to a curve, which is not possible, because any two curves in $\mathbb P^2$ intersect.

Let us consider now the case $f(S)$ is a curve and $Z$ is a curve. In this case, $E$ is a ruled surface: let $\ell\simeq \P^1$ be a general member of its ruling, and $\ell'\subset X^*$ its image under $c$. Notice that 
\begin{eqnarray}\label{exclin}
(K_{\hat{X}}.\ell)=-1.
\end{eqnarray}
 If $E$ is not contracted by $c$, we get that $\ell'$ is a curve and $c\vert _{\ell}:\ell\ra \ell'$ is its normalization. As $K_{X^*}$ is nef, we have $(K_{X^*}.\ell')\geq 0.$ But since $c$ is a blow-up, we get that $K_{\hat{X}}=c^*(K_{X^*})+\Delta$ where $\Delta$ is some effective divisor on $\hat{X}$; notice that $\ell$ is not included in $\Delta$. But then  $(K_{\hat{X}}.\ell)=(c^*K_{X^*}.\ell)+(\Delta. \ell)\geq  (c^*K_{X^*}.\ell)= (K_{X^*}.\ell')\geq 0$, contradicting \eqref{exclin}.

The last case when $f(S)$ is a point is excluded by the assumption that $f$ is flat.
\end{proof}

\medskip

\item[Step 2] We already know that $X^*$ is a quasi-bundle: we infer that {\em the only curves on $X^*$ are the fibres of $f$.}

\begin{proof}[Proof of Step 2]
Indeed, if $C\subset X^*$ would be a transverse curve, let  $D:=f(C)$ and $T:=f^{-1}(D)$. Notice that $T$ is a fibration over $D$ whose fibres are among the fibres of $f$; then the desingularization $\hat{T}$ is also a fibration over $D$ which has a transversal curve, hence by \cite[Proposition 2.14]{brinzanescu}, we get that $\hat{T}$ is of K\"ahler type. Taking the proper transform $T'$ of $T$ to $\hat{X}$ under $c$, we get that $\hat{T}$ is wlcK; and applying Lemma \ref{lem:ansp2}, we get that $[\theta]_{\vert T}=0$, hence the restriction of $[\theta]$ to the fibres of $f$ would vanish as well. We get that $[\theta]$ would be a pull-back from $B$.
\end{proof}

\item[Step 3] At this step we prove that {\em we can remove all blow-ups in the chain $\sigma$, to get that $X$ is a blown-up of $X^*$.}

\begin{proof}[Proof of Step 3]
Write $c$ as $c=c_1\circ \cdots \circ c_m$ where $c_i \colon \hat{X}_i\to \hat{X}_{i-1}$ are blow-ups with smooth loci (with $\hat{X}_m=\hat{X}$ and $\hat X_0=X^*$) and similarly $\sigma=\sigma_1\circ \dots \sigma_n$, where $\sigma_j \colon X_j\ra X_{j-1}$ (with $X_n=\hat{X}$ and $X_0=X$). We prove that $c_m=\sigma_n$, hence $X_{n-1}=\hat{X}_{m-1}$, so we can replace $\hat{X}$ by $X_{n-1}$. By induction on $n$, we get the claim.

First we notice that, in the above chain of blowups $c_1,\dots, c_m$, {\em the first blow-up $c_1$ is necesssarily the blow-up of a point in $X^*=\hat X_0$}.
On the contrary, suppose that $c_1$ is the blow-up of a curve and let $E_1\subset \hat X_1$ be the exceptional divisor of $c_1$ and $\hat{E_1}\subset \hat{X}$ its strict transform. Since all curves in $X^*$ are the fibres $F$ of $f$ by {\itshape Step 2}, we see $E_1\simeq F\times \P^1$  and $\hat{E_1}$ is a possible blow-up of it: in particular $\hat{E_1}$ is a K\"ahler surface.
Consider $\theta$ the Lee form of the lcK metric on $X$. Since blow-ups induce isomorphisms at the level of the first cohomology group $H^1$, we see there are unique cohomology classes $[\hat\theta]\in H^1(\hat{X})$ and $[\theta^*]\in H^1(X^*)$ corresponding to $[\theta]\in H^1(X)$. But, as $\hat E_1$ is  K\"ahler, we see that $[\hat{\theta}]_{\vert \hat E_1}=0.$
We derive that $[\theta^*]_{\vert F}=0$, hence $[\theta^*]$ is a pull-back form $H^1(B)$; but, by Lemma \ref{lem:fibration}, we would get that $\hat{X}$ is K\"ahler, absurd.

Let $E\subset \hat{X}$ be the exceptional divisor of $\sigma_n$; it is either a plane $\P^2$ or a ruled surface. By {\it Step 1}, we know that $E$ is also contracted under $c$, so it must be contracted by some $c_i$, hence $E$ would dominate the exceptional locus $E_i$ of $c_i$. If $i<m$, then $E$ is the proper transform of $E_i$ under $c_{i+1}\circ\cdots\circ c_m$, hence $E$ is a blow-up at some point of $E_i$; in particular, $E$ would be non-minimal, contradiction, except possibly for the case when $E=\Sigma_1$, the first Hirzebruch surface --- which is ${\mathbb P}^2$ blown-up at one point. But this case is easily excluded by looking at the restriction of the normal bundle of $E$ to a fibre $F$ of its ruling. Namely, if we look $E$ as being the exceptional divisor of $\sigma_n$, then $\left({\mathcal N}_{E\vert\hat{X}}\right)_{\vert F}={\mathcal O}_F(-1),$ 
while if we look at $E$ as being the proper transform of some exceptional divisor $E_i$ of some $c_i$ ($i<m$), then 
$\left({\mathcal N}_{E\vert\hat{X}}\right)_{\vert F}={\mathcal O}_F(-d)$ 
with $d\geq 2.$
Hence {\em $\sigma_n$ and $c_m$ have the same exceptional locus}.

If $E$ is the projective plane, then this already implies that $c_m=\sigma_n$, amd we are done. So the only possibility for $\sigma_n\not=c_m$ would be that $E$ is a ruled surface  that is contracted in two different ways to some smooth  curves $C_1\subset X_{n-1}$ and $C_2\subset \hat{X}_{m-1}$.

So $C_2$ would be a flopping curve in $\hat{X}_{m-1}$. Since blowing-up the flopping curve $C_2$ yields a ruled surface with two rulings (because the exceptional divisor contracts to the smooth curve $C_1$), this means that this surface is $E=\P^1\times \P^1$, hence the curve is smooth rational and  the normal bundle must be of the form ${\mathcal O}(a)\oplus {\mathcal O}(a)$ for some $a\in \Z.$ Using the following lemma, we see this cannot happen, hence the theorem is proved.

\begin{lem}\label{lem:no-flop}
Let $c':= c_1\circ \dots\circ c_{m-1} \colon \hat{X}_{m-1}\to X^*$ be a composition of blow-ups with smooth centres, and assume the exceptional divisor $\Delta\subset \hat{X}_{m-1}$ of $c'$ is contracted to a point in $X^*$. Then $\Delta$ contains no flopping curves.
\end{lem}

\begin{proof}[Proof of Lemma \ref{lem:no-flop}]
By  \cite[page 1398, case (1)]{donovan-wemyss}, the  normal bundle $\mathcal{N}_{C_2\vert \hat{X}_{m-1}}$ of a flopping curve is of one of the following three types:
\begin{enumerate}
\item ${\mathcal O}_{C_2}(-1)\oplus {\mathcal O}_{C_2}(-1)$, the ``Atiyah flop'';
\item ${\mathcal O}_{C_2}\oplus {\mathcal O}_{C_2}(-2)$;
\item ${\mathcal O}_{C_2}(1)\oplus {\mathcal O}_{C_2}(-3)$.
\end{enumerate}
By the above remark on the normal bundle of $C_2$, we are left to exclude the case 1.

Let $E\subset \Delta$ be an irreducbile component of $\Delta$ containing $C_2$. The normal bundle sequence
$$
0\to 
\mathcal{N}_{{C_2}\vert E}
\to
\mathcal{N}_{{C_2}\vert \hat{X}_{m-1}}
\to
\left(\mathcal{N}_{E\vert \hat{X}_{m-1}}\right) _{\vert {C_2}}
\to
0
$$
reads in this situation
\begin{eqnarray}\label{norm}
0\to
\mathcal{N}_{C_2\vert E}
\to
{\mathcal O}_{C_2}(-1)\oplus{\mathcal O}_{C_2}(-1)\to
\left(\mathcal{N}_{E\vert \hat{X}_{m-1}}\right) _{\vert C_2}
\to
0.
\end{eqnarray}
By \cite[Theorem 3.7, combined with Theorem 2.9]{ancona}, the normal bundle of $\Delta$ in $\hat{X}_{m-1}$, namely ${\mathcal O}_{\hat{X}_{m-1}}(\Delta)$, is negative (equivalently, anti-ample) on $\Delta$. Writing $\Delta=E+R$, we see
\begin{eqnarray*}
\left(\mathcal{N}_{E\vert \hat{X}_{m-1}}\right) _{\vert C_2}
&=& {\left({\mathcal O}_{\hat{X}_{m-1}}(E)_{\vert E}\right)}_{\vert C_2} \\
&=& {\left({\mathcal O}_{\hat{X}_{m-1}}(\Delta-R)_{\vert E}\right)}_{\vert C_2} \\
&=& {\left({\mathcal O}_{\hat{X}_{m-1}}(\Delta)_{\vert \Delta}\right)}_{\vert C_2}\otimes {\left({\mathcal O}_{\hat{X}_{m-1}}(-R)_{\vert E}\right)}_{\vert C_2} .
\end{eqnarray*}
Let 
\begin{eqnarray}\label{left}
a:=-(C_2)^2=\deg \mathcal{N}_{C_2\vert E} ;
\end{eqnarray}
then $a\geq1$, since $\mathcal{N}_{C_2\vert E}$ is a subsheaf of ${\mathcal O}(-1)^{\oplus 2}.$ 
On the other hand, by the anti-ampleness of $\Delta_{\vert \Delta}$, we see
\begin{equation}\label{right}
{\left({\mathcal O}_{\hat{X}_{m-1}}(\Delta)_{\vert \Delta}\right)}_{\vert C_2} ={\mathcal O}_{C_2}(-b)\text{ for some }b\geq 1.
\end{equation}
Next, 
\begin{equation}\label{right2}
\deg{\left({\mathcal O}_{\hat{X}_{m-1}}(-R)_{\vert E}\right)}_{\vert C_2}=(-(R\cap E)\cdot C_2)=\epsilon a-k
\end{equation}
where $\epsilon=1$ or $\epsilon=0$ --- according to the fact that $C_2$ is a subset of $R\cap E$  (that is, if $C_2$ lives on more than one component of $\Delta$) or not, respectively --- and $k\geq 0$ is the intersection of $C_2$ with the components of $R\cap E$ different form $C_2$.

Put all together, the normal sequence \eqref{norm} becomes
\begin{eqnarray}
\lefteqn{ 0\to
{\mathcal O}_{C_2}(-a)
\to
{\mathcal O}_{C_2}(-1)\oplus{\mathcal O}_{C_2}(-1) }\\
&&\to
{\mathcal O}_{C_2}(-b+\epsilon a-k)
\to
0.\nonumber
\end{eqnarray}
Then, taking determinants, we get:
\begin{itemize}
\item $-2=-b-k$ when $\epsilon=1$,
\item $-2=-a-b-k$ when $\epsilon=0$.
\end{itemize} 

The first case can be eliminated as follows. We have two subcases: either $k=0$ and $b=2$, or $k=1$ and $b=1$.

If $k=0$, this means that the curve $C_2$ is the intersection of  two  components (the $E$ above and some other, say $E_1$) of $\Delta$ and meets no other component of it. Then, one of the $E$ and $E_1$ --- say $E_1$ --- is the blowup of $C_2$ as a curve living on $E$. Then, as  
$${\mathcal N}_{C_2\vert E}\oplus {\mathcal N}_{C_2\vert E_1}={\mathcal N}_{C_2\vert \hat{X}_{m-1}}={\mathcal O}_{C_2}(-1)^{\oplus 2}$$
we see 
${\mathcal N}_{C_2\vert E}\simeq {\mathcal O}_{C_2}(-1)$
hence $C_2$ would be a $(-1)$-curve,
which is a contradiction with the fact that blowing up smooth curves on threefolds produce minimal surfaces as exceptional divisors, see \cite[page 346, ``Blow up of a smooth curve'']{pinkham}.

In the subcase $k=1$ and $b=1$, the above reasoning does not apply, since the curve $C_2$ is a strict transform of a curve $C'$ of an exceptional divisor of some previous blow-up, call it $c''$. But then the normal bundle of $C_2$ in $\hat{X}_{m-1}$ would be strictly smaller than the degree of the normal bundle of $C'$, which is  negative by \cite{ancona}; so we would get $b\geq 2$ in this case, which is a contradiction again.

In the second case, as $a\geq 1$ (by \eqref{left}) and $b\geq 1$ (by \eqref{right}), we get that $a=b=1$ and $k=0$; this means that $C_2$ does not meet any other component of $\Delta$, and henceforth $C_2$ would live as an exceptional curve on a divisor produced by some blow-up, which is a contradiction with \cite{pinkham} again.
\end{proof}
\end{proof}
\end{description}
This completes the proof of the statement.
\end{proof}


\end{document}